\newtheorem{theo}{Theorem}[section]
 \newtheorem{definition}[theo]{Definition}
 \newtheorem{lem}[theo]{Lemma}
\newtheorem{propos}[theo]{Proposition}
 \newtheorem{corollary}[theo]{Corollary}
\newcommand{\R}{{\mathbb{R}}}
\newcommand{\N}{{\mathbb{N}}}
\newcommand{\eps}{\varepsilon}
\newcommand{\loglike}[1]{\mathop{\rm #1}\nolimits}
\newcommand{\ex}{\loglike{ext}}
\newcommand{\dist}{\rho}
\newcommand{\bea}{\begin{eqnarray*}}
\newcommand{\eea}{\end{eqnarray*}}
\newcommand{\beq}{\begin{eqnarray}}
\newcommand{\eeq}{\end{eqnarray}}
\newcommand{\vol}{\operatorname{vol}}
\newcommand{\Face}{\operatorname{\mathfrak{Face}}}
\newcommand{\Slice}{\operatorname{\mathfrak{Slice}}}
\newcommand{\ifff}{if and only if }
 \renewcommand{\le}{\leqslant}
\renewcommand{\leq}{\leqslant}
\renewcommand{\ge}{\geqslant}
\renewcommand{\geq}{\geqslant}
\numberwithin{equation}{section}
\begin{document}

\title[Generalized-lush spaces]{Generalized-lush spaces revisited}

\address{ School of Mathematics and Informatics, V.N. Karazin Kharkiv National University, 61022 Kharkiv, Ukraine}
\author[Kadets]{V. Kadets}

\address[Kadets]{ \href{http://orcid.org/0000-0002-5606-2679}{ORCID: \texttt{0000-0002-5606-2679}}}
\email{v.kateds@karazin.ua }

\author[Zavarzina]{O. Zavarzina}
\address[Zavarzina]{\href{http://orcid.org/0000-0002-5731-6343}{ORCID: \texttt{0000-0002-5731-6343}}}
\email{olesia.zavarzina@yahoo.com}

\subjclass[2010]{46B04; 46B20; 46B08}

\keywords{Tigley's problem, Mazur--Ulam property, polyhedral space, GL-space, ultraproduct}
\thanks{The research is done in frames of Ukrainian Ministry of Science and Education Research Program 0118U002036}

\begin{abstract}
We study geometric properties of GL-spaces. We demonstrate that every finite-dimensional GL-space is polyhedral; that in dimension 2 there are only two, up to isometry, GL-spaces, namely the space whose unit sphere is a square (like $\ell_\infty^2$ or $\ell_1^2$) and the space whose unit sphere is an equilateral hexagon. Finally, we address the question what are the spaces $E = (\R^n, \|\cdot\|_E)$ with absolute norm such that for every collection $X_1, \ldots, X_n$ of GL-spaces their $E$-sum is a GL-space.
\end{abstract}

\maketitle

\section{Introduction}

 In 1987, Daryl~Tingley \cite{ting} posed the following question: let $f$ be a bijective isometry between the unit spheres $S_X$ and $S_E$ of real Banach spaces $X$, $E$ respectively. Is it true that $f$ extends to a linear isometry $F: X \to E$ of the corresponding spaces?

Considering Tigley's question Lixin Cheng and Yunbai Dong \cite{cheng} introduced the following useful terminology:

\begin{definition} \emph{A Banach space $X$ is said to have the} Mazur--Ulam
property \emph{provided that for every Banach space $E$, every surjective isometry $f \colon S_X \to S_E$ is the restriction of a linear isometry between $X$ and $E$}.
\end{definition}

There is a number of publications devoted to Tingley's problem
(say, Zentralblatt Math. shows 57 related papers published from 2002 to 2019) and, in particular, it is known \cite{kad-mar} that finite-dimensional polyhedral spaces (i.e. those spaces whose unit ball is a polyhedron) have the Mazur--Ulam property. Surprisingly, for general spaces the innocently-looking Tigley's question remains unanswered even in dimension two.

 In 2013 Dongni Tan, Xujian Huang, and Rui Liu \cite{TanHuangLiu} have made a substantial advance. In order to explain it let us give some definitions.

 The distance from a point $x$ of a normed space $X$ to a non-empty subset $A\subset X$ is defined to be the infimum of the distances from $x$ to the elements of~$A$: $\dist (x,A) = \mathop{\inf}_{a \in A} \|x - a\|$.
\begin{definition}
\emph{A} closed slice of the unit ball $B_X$ \emph{of a Banach space} $X$ \emph{is a subset of }$B_X$ \emph{of the form}
$$
\Slice(x^*,\alpha)=\{ x\in B_X: x^*(x) \geq 1-\alpha\},
$$
\emph{where $x^*\in S_{X^*}$ and $\alpha\in(0,1)$}.
\end{definition}
\begin{definition}[{\cite{TanHuangLiu}}]
\emph{A Banach space $X$ is said to be} generalized-lush \emph{(GL-space for short) if for every $x\in S_X$ and every $\eps > 0$ there exists a slice $\Slice(x^*,\eps)$ with $x^*\in S_{X^*}$ such that $x\in \Slice(x^*,\eps)$ and
$$\dist(y,\Slice(x^*,\eps))+\dist(- y, \Slice(x^*,\eps)) < 2+\eps$$
for all $y\in S_X$. A Banach space $E$ is said to be a} local-GL-space \emph{if for
every separable subspace $Y \subset E$ there is a GL-subspace $X \subset E$ such that $Y \subset X \subset E$.}
\end{definition}

These definitions are motivated by the concept of lush spaces, introduced in \cite{BKMW} in relation to the numerical index of operators, see Section 1.5 in the recent monograph \cite{Spears} for the definition, examples and the history of the subject.

In \cite{TanHuangLiu} it is demonstrated that all local-GL-spaces (and consequently all GL-spaces, all lush spaces and in particular all $C(K)$ and $L_1(\mu)$ spaces) enjoy the Mazur--Ulam property. Also it is shown that the class of GL-spaces is stable under $c_0$-, $\ell_1$- and $\ell_{\infty}$-sums and that the space $C(K,X)$ is a GL-space whenever $X$ is a GL-space, which gives a number of examples of spaces with the Mazur--Ulam property. In the same vein Jan-David Hardtke \cite{Hardtke} demonstrated that the class of GL-spaces is stable under ultraproducts and under passing to a large class of F-ideals, in particular to M-ideals.

In this article we, at first, demonstrate that every finite-dimensional GL-space is polyhedral (Theorem \ref{theo-main1}), that is in the finite-dimensional case the results of \cite{TanHuangLiu} do not give new examples of Mazur--Ulam spaces comparing to \cite{kad-mar}. At second, in Theorem \ref{theo-main2} we show that in dimension 2 there are only two, up to isometry, GL-spaces, namely the space whose unit sphere is a square (like $\ell_\infty^2$ or $\ell_1^2$) and the space whose unit sphere is an equilateral hexagon. The above results are collected in Section \ref{sec-polyhedr}. Finally, in Section \ref{sec-sums} we address the question what are the spaces $E = (\R^n, \|\cdot\|_E)$ with absolute norm such that for every collection $X_1, \ldots, X_n$ of GL-spaces their $E$-sum is a GL-space.

In the exposition we will use the standard Banach space theory notations. In particular, for Banach space $X$ we denote, as we already have done above, by $B_X$, $S_X$ and $X^*$ the closed unit ball, unit sphere and the dual space respectively. In our paper we consider only real Banach spaces and real linear functionals (otherwise the definition of slice should be modified). All unexplained functional analysis terminology can be found in \cite{Kad}.

\section{Polyhedrality of finite-dimensional GL-spaces} \label{sec-polyhedr}

Recall that the \emph{Hausdorff distance} between two not empty closed subsets $A$ and $B$ of a metric space $X$ is defined as
$$
\rho_{\rm H} (A,B) = \max \left\{{\mathop {\sup }\limits_{b \in B} \rho (b,A),\;\mathop
{\sup }\limits_{a \in A} \rho (a,B)} \right\}.
$$
According to the Blaschke selection theorem (see \cite[Theorem 2.5.14]{Thompson}) the collection of not empty closed convex subsets of a given bounded subset of a finite-dimensional normed space forms a compact in the Hausdorff metric.
\begin{definition}
\emph{A} face of the unit ball \emph{of a Banach space} $X$ \emph{is a not-empty set of the form}
$$
\Face(x^*) = \{ x\in B_X: x^*(x) = 1\},
$$
\emph{where} $x^*\in S_{X^*}$.
\end{definition}
\begin{definition} \label{defin-plump}
\emph{We call a subset} $A \subset B_X$ plump \emph{if for every $y\in S_X$  there are $a_1, a_2 \in A$ such that}
$$
\|y - a_1\| + \|y + a_2\| \leq 2.
$$
\end{definition}
Remark, that in the case of compact $A$ the distances attain, so the inequality  $\dist(y, A)+\dist(- y, A) \leq 2$  implies automatically the existence of corresponding $a_1, a_2 \in A$; that if $A \subset B \subset B_X$ and $A$ is plump then $B$ is plump as well; and if for every $\eps > 0$ every point $x \in S_X$ is contained in a plump slice $\Slice(x^*,\eps)$ with $x^*\in S_{X^*}$ then $X$ is a GL-space.

The following definition is motivated by \cite[Proposition 2.2]{Hardtke} and by analogous concepts of ultra-lush spaces from \cite{BKMM2009} and rigid narrow operator with respect to a subset from \cite{BKSW2005}.

\begin{definition} \label{def-U-GL} \emph{A normed space $X$ is said to be}  ultra-GL with respect to a subspace $W \subset X^*$ \emph{(ultra-GL($W$)-space) if for every $x\in S_X$ there exists an $x^*\in S_{W}$ such that $x \in \Face(x^*)$ and $\Face(x^*)$ is plump. In the case of $W = X^*$ the space $X$ is said to be} ultra-GL.
\end{definition}

\begin{lem}
Let $X$ be a finite-dimensional normed space. Then the following conditions are equivalent:
\begin{enumerate}
 \item $X$ is a GL-space;
 \item $X$ is ultra-GL.
 \item for every $x\in S_X$ there exists a convex plump subset $B\subset S_X$ such that $x\in B$.
\end{enumerate}
\end{lem}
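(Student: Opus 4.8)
The plan is to close the cycle $(1)\Rightarrow(2)\Rightarrow(3)\Rightarrow(1)$, leaning on the three remarks recorded after Definition~\ref{defin-plump} to handle all the ``soft'' conversions between plump faces, plump slices and the GL-property. The implication $(2)\Rightarrow(3)$ should be a matter of unwinding definitions: if $x\in\Face(x^*)$ and $\Face(x^*)$ is plump, then $B:=\Face(x^*)$ is the intersection of $B_X$ with the hyperplane $\{x^*=1\}$, hence convex; it lies in $S_X$ because $x^*(b)=1=\|x^*\|$ forces $\|b\|\geq 1$ while $b\in B_X$; it is plump and contains $x$. So $B$ is the required convex plump subset of the sphere.

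For $(3)\Rightarrow(1)$ the key geometric observation is that \emph{every} convex subset $B\subseteq S_X$ is contained in a single face. I would pick a point $b_0$ in the relative interior of $B$ and a norming functional $x^*\in S_{X^*}$ with $x^*(b_0)=1$. For an arbitrary $b\in B$, relative-interiority produces $b'\in B$ and $t\in(0,1)$ with $b_0=tb+(1-t)b'$; since $x^*(b),x^*(b')\leq 1$, the equality $t\,x^*(b)+(1-t)\,x^*(b')=1$ forces $x^*(b)=1$. Hence $B\subseteq\Face(x^*)$, so $\Face(x^*)$ is plump (a superset of a plump set is plump) and contains $x$. As $\Face(x^*)\subseteq\Slice(x^*,\eps)$ for every $\eps\in(0,1)$, the last remark after Definition~\ref{defin-plump} yields that $X$ is a GL-space. (In passing this also establishes $(3)\Rightarrow(2)$.)

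The substance is in $(1)\Rightarrow(2)$, where I must upgrade the approximate GL condition, phrased with slices of width $\eps$ and the bound $2+\eps$, to the exact face condition. Fixing $x\in S_X$ and applying the GL property with $\eps=1/n$, I obtain $x_n^*\in S_{X^*}$ with $x\in\Slice(x_n^*,1/n)$ and $\dist(y,\Slice(x_n^*,1/n))+\dist(-y,\Slice(x_n^*,1/n))<2+1/n$ for all $y\in S_X$. Finite-dimensionality enters here: $S_{X^*}$ is compact, so along a subsequence $x_n^*\to x^*$ with $\|x^*\|=1$, and $x_n^*(x)\geq 1-1/n$ passes to $x^*(x)=1$, i.e. $x\in\Face(x^*)$. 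To see that $\Face(x^*)$ is plump, I fix $y\in S_X$; the slices are compact, so the distances are attained at $a_1^{(n)},a_2^{(n)}\in\Slice(x_n^*,1/n)$ with $\|y-a_1^{(n)}\|+\|y+a_2^{(n)}\|<2+1/n$. Extracting a further convergent subsequence $a_i^{(n)}\to a_i\in B_X$ and using $x_n^*(a_i^{(n)})\to x^*(a_i)$, I get $x^*(a_i)\geq 1$, hence $a_i\in\Face(x^*)$, while $\|y-a_1\|+\|y+a_2\|\leq 2$. Thus $\Face(x^*)$ is plump and $X$ is ultra-GL. Blaschke's selection theorem is the clean way to package the compactness of the family of slices, though one can equally extract the witnesses $a_i$ directly from compactness of $B_X$.

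\textbf{Main obstacle.} The delicate point is exactly this limiting step: one must guarantee that, as $\eps\to 0$, the slices collapse onto the exact face of the limiting functional \emph{and} the plumpness inequality survives the passage to the limit with the sharp constant $2$ rather than $2+\eps$. The two ingredients that make this work are the compactness of $S_{X^*}$, which manufactures a limiting functional whose face still captures $x$, and the attainment of distances on the compact slices together with the compactness of $B_X$, which manufactures the witnesses $a_1,a_2$ inside the limiting face. Everything away from this compactness core is routine, since the recorded remarks and the relative-interior argument carry the rest of the equivalences.
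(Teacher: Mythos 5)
Your proof is correct, but it closes the cycle in the opposite direction from the paper and uses a different compactness device at the crucial step. The paper proves $(3)\Rightarrow(2)\Rightarrow(1)\Rightarrow(3)$, with the hard implication $(1)\Rightarrow(3)$ handled by the Blaschke selection theorem: the slices $\Slice(x_n^*,\frac1n)$ converge along a subsequence in the Hausdorff metric to a closed convex set $B\subset S_X$, and plumpness of $B$ follows from the Hausdorff-continuity of $S\mapsto\dist(y,S)$. You instead prove $(1)\Rightarrow(2)$ directly by sequential compactness of $S_{X^*}$ (to manufacture the limiting functional $x^*$ with $x\in\Face(x^*)$) and of $B_X$ (to push the witnesses $a_i^{(n)}$ into $\Face(x^*)$ with the sharp bound $2$); this is more elementary, avoids Blaschke altogether, and lands straight on a face rather than on an abstract convex limit set. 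Your $(3)\Rightarrow(1)$ also merges the paper's two easy steps into one: where the paper separates the convex plump set $B$ from the open unit ball by Hahn--Banach to find a face containing it, you reach the same conclusion via a norming functional at a relative-interior point of $B$; both arguments are valid (in finite dimensions the relative interior of a nonempty convex set is nonempty, so your degenerate case $B=\{b_0\}$ is also covered). All the quantifier bookkeeping in your limiting argument is sound: $x^*$ is fixed before $y$ is chosen, and only the witnesses $a_1,a_2$ depend on $y$, which is exactly what plumpness requires.
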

\begin{proof}
$\mathbf{(3) \Rightarrow (2)}$ Let the third condition hold. Then for given $x\in S_X$ there is a convex plump subset $B\subset S_X$ containing $x$. This $B$ can be separated from the open unit ball by a norm-one functional (Hahn-Banach), so there exists a functional $x^*\in S_{X^*}$ such that $B \subset \Face(x^*)$. Then $x \in \Face(x^*)$ and $\Face(x^*)$ is plump.

$\mathbf{(2) \Rightarrow (1)}$ Take for given $x\in S_X$ the corresponding $x^*\in S_{X^*}$ that generates a plump face containing $x$. For every $\eps>0$ consider the slice $\Slice(x^*,\eps)$. It is obvious that $\Face(x^*) \subset \Slice(x^*,\eps) \subset B_X$, so $\Slice(x^*,\eps)$ is plump.

$\mathbf{(1)\Rightarrow(3)}$ Observe, that (1) means for every $n\in \N$ there exists $x_n^*\in S_{X^*}$ such that $x\in \Slice(x_n^*,\frac{1}{n})=S_n$ and
\begin{align}\label{eq*}
 \dist(y, S_n)+\dist(y, - S_n) < 2+\frac{1}{n}
\end{align}
for every $y\in S_X$.
The Blaschke selection theorem implies the existence of a subsequence $S_{n_k}$ that converges in the Hausdorff metrics to a closed convex set
$$
B =\lim_{k\to \infty}S_{n_k} \subset B_X.
$$
Obviously, the inclusion $S_n \subset \{x \in B_X \colon \|x\| \ge 1 - \frac{1}{n}\}$ implies that the limiting set $B$ lies on the unit sphere. Since for a fixed $y\in S_X$ the distance $\dist(y, S)$ depends continuously in the Hausdorff metrics on the variable $S$, \eqref{eq*} gives us the desired inequality $\dist(y,B)+\dist(y,-B)\leq 2$.
\end{proof}

Note the following obvious corollary.

\begin{corollary}\label{cor-plump-union}
Let $X$ be a finite-dimensional GL-space. Then $S_X$ is the union of its plump faces.
\end{corollary}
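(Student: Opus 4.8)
The plan is to read the statement off directly from the equivalence of conditions (1) and (2) in the preceding Lemma; the corollary is really just a repackaging of the definition of an ultra-GL space, which is why the authors call it obvious. First I would invoke the Lemma: since $X$ is a finite-dimensional GL-space, condition (1) holds, hence so does condition (2), i.e. $X$ is ultra-GL. Unwinding Definition \ref{def-U-GL} (with $W = X^*$), this says precisely that for every $x \in S_X$ there is a functional $x^* \in S_{X^*}$ with $x \in \Face(x^*)$ and with $\Face(x^*)$ plump. Thus every point of the unit sphere is contained in at least one plump face.

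Next I would check the routine point that each face is genuinely a subset of $S_X$, so that the union of the plump faces is a subset of $S_X$ (not merely of $B_X$). This is the one-line computation: if $x^* \in S_{X^*}$ and $x \in \Face(x^*)$, then $1 = x^*(x) \le \|x^*\|\,\|x\| = \|x\|$, while $x \in B_X$ forces $\|x\| \le 1$; hence $\|x\| = 1$ and $\Face(x^*) \subset S_X$. Combining this with the previous paragraph yields both inclusions: every $x \in S_X$ lies in some plump face (so $S_X$ is contained in the union of its plump faces), and every plump face is contained in $S_X$ (so the union is contained in $S_X$). Therefore $S_X$ equals the union of its plump faces.

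There is no real obstacle here: all the substance has already been absorbed into the Lemma, and what remains is to transcribe the existential statement ``each $x$ lies in a plump face'' into the set-theoretic statement ``$S_X$ is the union of its plump faces,'' together with the trivial verification that faces sit on the sphere. The only thing one must be mildly careful about is to justify the containment of the union inside $S_X$ rather than inside $B_X$, which the above norm estimate settles.
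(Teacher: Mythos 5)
Your argument is correct and is exactly the intended one: the paper states this as an ``obvious corollary'' of the preceding lemma, whose implication $(1)\Rightarrow(2)$ (GL implies ultra-GL) gives precisely that every point of $S_X$ lies in a plump face, and the norm estimate $1 = x^*(x) \le \|x\|$ showing each face sits on the sphere is the only remaining detail. Nothing is missing.
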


\begin{lem} \label{lemma-plump-face-description}
Let $X$ be a normed space, $x^*\in S_{X^*}$. Then the following conditions are equivalent:
\begin{enumerate}
 \item $\Face(x^*)$ is plump;
 \item $\dist(y, \Face(x^*))+\dist(- y, \Face(x^*)) \leq 2$ for every $y \in B_X$ and the distances are attained.
 \item $\dist(y, \Face(x^*)) = 1 - x^*(y)$ for every $y \in B_X$ and the distance is attained.
 \item $\dist(y, \Face(x^*)) = 1 - x^*(y)$ for every $y \in S_X$ and the distance is attained.
\end{enumerate}
\end{lem}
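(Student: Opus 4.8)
The plan is to rest everything on one elementary observation and then route the four conditions through it. For every $a \in \Face(x^*)$ and every $y \in B_X$ we have $\|y - a\| \ge |x^*(y - a)| = 1 - x^*(y)$, because $x^*(a) = 1$ and $x^*(y) \le \|y\| \le 1$; taking the infimum over $a$ yields the universal lower bound $\dist(y, \Face(x^*)) \ge 1 - x^*(y)$ for all $y \in B_X$. I would also note at the outset that each of the four conditions forces $\Face(x^*) \neq \emptyset$ (plumpness needs points $a_1, a_2$, and ``the distance is attained'' needs a nearest point), so there is no vacuity issue, and that $\Face(x^*)$, being the intersection of $B_X$ with the hyperplane $\{x : x^*(x) = 1\}$, is convex.

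With the lower bound in hand, $(2) \Leftrightarrow (3)$ is immediate. Replacing $y$ by $-y$ turns $\dist(y, \Face(x^*)) = 1 - x^*(y)$ into $\dist(-y, \Face(x^*)) = 1 + x^*(y)$, whose sum is exactly $2$, giving $(3) \Rightarrow (2)$. Conversely, the lower bounds for $y$ and $-y$ already sum to $2$, so the hypothesis ``sum $\le 2$'' of $(2)$ forces both inequalities to be equalities, which is precisely $(3)$, with attainment inherited from $(2)$.

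For the link with plumpness I would prove $(1) \Leftrightarrow (4)$ directly on the sphere. Given $y \in S_X$, condition $(1)$ supplies $a_1, a_2 \in \Face(x^*)$ with $\|y - a_1\| + \|y + a_2\| \le 2$; since $\|y - a_1\| \ge \dist(y, \Face(x^*))$ and $\|y + a_2\| = \|(-y) - a_2\| \ge \dist(-y, \Face(x^*))$, and the two right-hand distances sum to at least $2$ by the lower bound, the whole chain collapses to equalities. Hence $\dist(y, \Face(x^*)) = 1 - x^*(y)$ with the infimum attained at $a_1$, which is $(4)$. The converse simply reads the nearest points of $y$ and of $-y$ back as the required pair. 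The remaining easy step is $(3) \Rightarrow (4)$, obtained by restricting from $B_X$ to $S_X$.

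The one step carrying genuine content — the main obstacle — is $(4) \Rightarrow (3)$, i.e. upgrading the identity from $S_X$ to all of $B_X$ together with attainment. Here I would exploit convexity of $\Face(x^*)$ and the basepoint $0$. For $y \in B_X \setminus \{0\}$ put $u = y / \|y\| \in S_X$; by $(4)$ there is $a \in \Face(x^*)$ with $\|u - a\| = 1 - x^*(u)$, while any $b \in \Face(x^*)$ gives $\|0 - b\| = 1 = 1 - x^*(0)$. Writing $y = \|y\|\, u + (1 - \|y\|)\, 0$ and forming the convex combination $c = \|y\|\, a + (1 - \|y\|)\, b \in \Face(x^*)$, the triangle inequality yields $\|y - c\| \le \|y\|(1 - x^*(u)) + (1 - \|y\|) = 1 - x^*(y)$; combined with the universal lower bound this forces $\|y - c\| = \dist(y, \Face(x^*)) = 1 - x^*(y)$, so the value is correct and the distance is attained at $c$. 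The case $y = 0$ is handled directly, and this closes the cycle, establishing the equivalence of all four conditions.
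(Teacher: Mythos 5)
Your proof is correct; every implication checks out, including the universal lower bound $\|y-a\|\ge x^*(a-y)=1-x^*(y)$ and the convexity argument in $(4)\Rightarrow(3)$. The route differs from the paper's only in organization: the paper runs the single cycle $(1)\Rightarrow(2)\Rightarrow(3)\Rightarrow(4)\Rightarrow(1)$ and places the sphere-to-ball upgrade at the edge $(1)\Rightarrow(2)$, writing $u-y=\lambda(u-\hat y)+(1-\lambda)u$ with $\lambda=\|y\|$ and estimating by the triangle inequality, whereas you prove $(1)\Leftrightarrow(4)$ and $(2)\Leftrightarrow(3)$ separately and put the sphere-to-ball step at $(4)\Rightarrow(3)$, manufacturing a new nearest point $c=\|y\|a+(1-\|y\|)b\in\Face(x^*)$ by convexity of the face. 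The two devices are essentially the same homogeneity trick (indeed the paper's point $u$ would itself serve as your $c$), so neither version buys anything the other lacks; your arrangement isolates the elementary lower bound up front, which makes $(2)\Leftrightarrow(3)$ a one-line consequence, while the paper's linear chain is marginally shorter to write. One small merit of your write-up is that you explicitly note nonvacuity of $\Face(x^*)$ and handle $y=0$, points the paper leaves implicit.
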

\begin{proof} The implications $(2)\Rightarrow(1)$ and $(3)\Rightarrow(4)$ are obvious, so it remains to prove the implications $(1)\Rightarrow(2)$, $(2)\Rightarrow(3)$ and $(4)\Rightarrow(1)$.

\vspace{2 mm}
{$\mathbf{(1)\Rightarrow(2)}$}. For a given $y \in B_X$ denote $\hat y = \frac{y}{\|y\|}$. Using  plumpness of $\Face(x^*)$ let us choose $u, v \in \Face(x^*)$ such that $\|u - \hat y\| + \|v + \hat y\| \le 2$. Denote $\lambda = \|y\|$. Then we have
\begin{align*}
\|u - y\| &+ \|v + y\| = \|\lambda(u - \hat y) + (1 - \lambda)u\| + \|\lambda(v + \hat y) + (1 - \lambda)v\| \\
& \le \lambda\|(u - \hat y)\| + (1 - \lambda)\|u\| + \lambda\|(v + \hat y)\| + (1 - \lambda)\|v\|
\\
&= \lambda\left(\|(u - \hat y)\| + \|(v + \hat y)\|\right) + (1 - \lambda) + (1 - \lambda) \le 2.
\end{align*}

{$\mathbf{(2)\Rightarrow(3)}$}. Now for a given $y \in B_X$ select $u, v \in \Face(x^*)$ with $\|u - y\| = \dist(y, \Face(x^*))$ and $\|v + y\| = \dist(-y, \Face(x^*))$. Then $\|u - y\| + \|v + y\| \le 2$, $\|u - y\| \ge x^*(u-y) = 1 - x^*(y)$ and $\|v + y\| \ge x^*(v+y) = 1 + x^*(y)$. Combining these three inequalities we obtain that
$$
 2 \geq \|u - y\| + \|v + y\| \geq 1 - x^*(y) +1 + x^*(y) = 2.
$$
This may happen only if $\|u - y\| = 1 - x^*(y)$ and $\|v + y\| = 1 + x^*(y)$.

\vspace{2 mm}
$\mathbf{(4)\Rightarrow(1)}$. Let $y \in S_X$, then $-y \in S_X$ so the condition (4) gives us $\dist(y, \Face(x^*)) = 1 - x^*(y)$ and $\dist(-y, \Face(x^*)) = 1 + x^*(y)$. Consequently,
 $\dist(y, \Face(x^*)) + \dist(-y, \Face(x^*)) = 2$.
\end{proof}

For a subset $A \subset X$ define its difference body $A - A$ as the Minkowski sum of $A$ and $-A$:
$$
A - A = \{x - y \colon x, y \in A\}.
$$

\begin{lem}\label{lem-plump-face}
Let $x^*\in S_{X^*}$ and $\Face(x^*)$ be plump. Then
$$
\Face(x^*) - \Face(x^*) \supset B_X \cap \ker x^*.
$$
\end{lem}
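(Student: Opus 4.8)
The plan is to reduce everything to the distance formula already established in Lemma \ref{lemma-plump-face-description}. Concretely, I want to show that every $z \in B_X \cap \ker x^*$ admits a representation $z = u - v$ with $u, v \in \Face(x^*)$, which is exactly the asserted inclusion.

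First I would fix such a $z$, so that $\|z\| \leq 1$ and $x^*(z) = 0$. Since $\Face(x^*)$ is plump, condition (3) of Lemma \ref{lemma-plump-face-description} applies and yields $\dist(z, \Face(x^*)) = 1 - x^*(z) = 1$, with the distance attained. Hence there is a point $u \in \Face(x^*)$ with $\|u - z\| = 1$.

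The key step is then to notice that the natural candidate $v := u - z$ already lies in the face. Indeed, $x^*(v) = x^*(u) - x^*(z) = 1 - 0 = 1$, precisely because $z$ annihilates $x^*$, and $\|v\| = \|u - z\| = 1 \leq 1$, so $v \in B_X$ and therefore $v \in \Face(x^*)$. Since $u - v = z$, this exhibits $z$ as an element of $\Face(x^*) - \Face(x^*)$, which is what I want.

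I expect the only delicate point to be the attainment of the distance: in an infinite-dimensional setting the infimum defining $\dist(z, \Face(x^*))$ need not be achieved in general. Here, however, attainment comes for free from Lemma \ref{lemma-plump-face-description}, whose characterization of plumpness already incorporates it. Once a minimizer $u$ is in hand, the crucial mechanism is that translating $u$ by the kernel vector $z$ leaves the value of $x^*$ unchanged, so the functional constraint defining the face is automatically preserved and only the norm bound must be checked, which the distance equality $\|u - z\| = 1$ supplies directly. Thus no genuine estimation is required beyond invoking the earlier lemma.
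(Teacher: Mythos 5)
Your proof is correct and follows essentially the same route as the paper: both invoke item (3) of Lemma \ref{lemma-plump-face-description} to get an attained nearest point in the face, and then observe that translating it by the kernel vector preserves both the functional value and the norm bound, hence stays in the face. The only cosmetic difference is that the paper takes the nearest point to $-z$ and adds $z$, while you take the nearest point to $z$ and subtract it.
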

\begin{proof}
Fix a $y \in B_X \cap \ker x^*$. Our goal is to show that $y \in \Face(x^*) - \Face(x^*)$. Indeed, select $v \in \Face(x^*)$ with $\|v + y\| = \dist(-y, \Face(x^*))$. According to item (3) of Lemma \ref{lemma-plump-face-description} $\|v + y\| = x^*(v + y) = 1$. Consequently, $v + y \in \Face(x^*)$, and $y = (v + y) - v \in \Face(x^*) - \Face(x^*)$. \end{proof}

\begin{theo}\label{theo-main1}
The unit ball of every finite-dimensional GL-space is a polyhedron whose faces are plump.
\end{theo}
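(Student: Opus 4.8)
The plan is to exploit Corollary~\ref{cor-plump-union} together with the difference-body Lemma~\ref{lem-plump-face}, the decisive point being a \emph{uniform} size estimate for plump faces. By Corollary~\ref{cor-plump-union}, $S_X$ is the union of its plump faces, so everything reduces to understanding those faces. First I would show that every plump face $\Face(x^*)$ is a \emph{facet}, i.e.\ has full dimension $\dim X-1$. Indeed, Lemma~\ref{lem-plump-face} gives $\Face(x^*)-\Face(x^*)\supseteq B_X\cap\ker x^*$. The right-hand side spans the hyperplane $\ker x^*$, so it has dimension $\dim X-1$; on the other hand $\Face(x^*)-\Face(x^*)$ spans the linear subspace parallel to the affine hull of $\Face(x^*)$, whose dimension equals $\dim\Face(x^*)$. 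Since $\Face(x^*)$ lies in the affine hyperplane $\{x^*=1\}$, its dimension is at most $\dim X-1$, and combining the two bounds it is exactly $\dim X-1$.

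The key quantitative input, and the step I expect to be the main obstacle, is to turn the difference-body inclusion into a lower bound on the size of $\Face(x^*)$ that does not depend on $x^*$. Write $n=\dim X$ and, using equivalence of norms in finite dimension, fix $r>0$ so that a Euclidean ball $rB_{\mathrm{eucl}}\subseteq B_X$. Then for every $x^*\in S_{X^*}$,
$$
\Face(x^*)-\Face(x^*)\supseteq B_X\cap\ker x^*\supseteq rB_{\mathrm{eucl}}\cap\ker x^*,
$$
which is an $(n-1)$-dimensional Euclidean ball of radius $r$ inside the hyperplane $\ker x^*$. Now I would invoke the Rogers--Shephard inequality $\vol_{n-1}(K-K)\le\binom{2(n-1)}{n-1}\vol_{n-1}(K)$ for a convex body $K$ in an $(n-1)$-dimensional space, applied to a translate of $\Face(x^*)$ into $\ker x^*$. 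This yields $\vol_{n-1}(\Face(x^*))\ge v_0$ with a constant $v_0>0$ depending only on $n$ and $r$, hence uniform in $x^*$. In other words, plumpness (through Lemma~\ref{lem-plump-face}) forbids plump faces from being arbitrarily thin; this uniform non-degeneracy is precisely where the difference-body lemma does the real work, and isolating it is the crux of the whole argument.

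With the uniform bound in hand, finiteness and polyhedrality follow routinely. Distinct faces of a convex body have disjoint relative interiors, and the relative interior of a facet is a flat $(n-1)$-dimensional piece of $S_X$ whose surface measure equals its volume, hence is $\ge v_0$. Since $S_X$ carries finite $(n-1)$-dimensional measure, at most $\mathcal{H}^{n-1}(S_X)/v_0$ plump faces can exist; let $\Face(x_1^*),\dots,\Face(x_N^*)$ be all of them. Because these faces cover $S_X$, any $x$ with $\|x\|>1$ has $x/\|x\|\in\Face(x_i^*)$ for some $i$, whence $x_i^*(x)=\|x\|>1$; together with the trivial inclusion $B_X\subseteq\{x:x_i^*(x)\le1\}$ this gives $B_X=\bigcap_{i=1}^N\{x:x_i^*(x)\le1\}$, a bounded polyhedron. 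Finally, every facet of this polytope lies in exactly one hyperplane $\{x_i^*=1\}$, so it equals $\Face(x_i^*)$ and is therefore plump by construction, which is the remaining assertion of the theorem.
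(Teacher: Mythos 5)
Your proposal is correct and follows essentially the same route as the paper: Lemma~\ref{lem-plump-face} plus the Rogers--Shephard inequality yields a uniform lower bound on $\vol_{n-1}(\Face(x^*))$, hence finitely many plump faces, and Corollary~\ref{cor-plump-union} then gives polyhedrality. The only cosmetic difference is that you obtain the uniform bound on $\vol_{n-1}(B_X\cap\ker x^*)$ from an inscribed Euclidean ball, whereas the paper takes the minimum of these volumes over all hyperplanes.
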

\begin{proof}
Let $X=(\R^n,\|\cdot\|)$ be a GL-space, and let $r >0$ be the minimal $(n-1)$-dimensional volume of intersections of $B_X$ with $(n-1)$-dimensional linear subspaces of $X$. Fix a plump face $\Face(x^*)$ and an $x \in \Face(x^*)$. Then $\Face(x^*) - x \subset \ker x^*$. Remark that due to the previous Lemma
$$
(\Face(x^*) - x) - (\Face(x^*) - x) = \Face(x^*) - \Face(x^*) \supset B_X \cap \ker x^*.
$$
According to the Rogers--Shephard theorem \cite[Theorem 1]{RogersSheph}, for every convex body $K$ in an $m$-dimensional space $E$
$$
\vol_{m}(K - K) \leq {{2m}\choose{m}} \vol_{m}(K).
$$
Applying that theorem to $K = \Face(x^*) - x \subset E := \ker x^*$ we obtain that
\begin{align*}
\vol_{n-1}(\Face(x^*)) &= \vol_{n-1}(\Face(x^*) - x) \\
&\geq {{2m}\choose{m}}^{-1} \vol_{n-1}\left((\Face(x^*) - x) - (\Face(x^*) - x)\right) \\
& \geq {{2m}\choose{m}}^{-1} \vol_{n-1}\left(B_X \cap \ker x^*\right) \geq {{2m}\choose{m}}^{-1} r.
\end{align*}
So the $(n-1)$-dimensional volumes of plump faces are bounded below by a positive number, so the set of plump faces of $B_X$ is finite. Enumerate $\{\Face(x_k^*)\}_{k=1}^N$, $x_k^* \in S_{X^*}$, all plump faces. Our Corollary \ref{cor-plump-union} ensures that $S_X = \bigcup_{k=1}^N \Face(x_k^*)$, which means that $B_X = \{ x \in \R^n \colon |x_k^*(x)| \le 1 \textrm{ for all } k = 1, 2, \ldots, N\}$.
\end{proof}

Before moving to our second main theorem we will set out the result, needed for the proof.
\begin{propos}[{\cite[Proposition 20]{MarSwanW}}]\label{Uball}
 On the unit sphere of a Minkow\-ski plane there are at most three
pairs of segments of length at least 1. If there are three pairs of segments of length
at least 1, then the unit disc must be a hexagon with vertices $\pm x_1,\pm x_2,\pm \lambda(x_1+x_2)$
for some $\lambda \in (\frac{1}{2},1]$, and at least two pairs are of length exactly 1.
\end{propos}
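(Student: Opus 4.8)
The plan is to work throughout with maximal segments (faces) of $S_X$, grouped into antipodal pairs, and to lean on one elementary observation: if $[a,b]\subset S_X$ is such a segment, then its midpoint lies on $S_X$, so $\|a+b\|=2$, its length equals $\|b-a\|$, and — since parallel faces of a centrally symmetric body are antipodal — two distinct faces that are not antipodal have non-parallel chords. I will also freely normalise by a linear change of coordinates, which is a Minkowski-plane isomorphism and preserves all of this structure.

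For the count I would invoke Go\l{}\k{a}b's theorem: the self-perimeter $L$ of the unit circle of a Minkowski plane satisfies $6\le L\le 8$, with $L=8$ characterising parallelograms. Four pairs would supply eight pairwise distinct faces, each of length $\ge 1$ and meeting only in endpoints, whence $L\ge 8$; then $L=8$ forces $B_X$ to be a parallelogram, which has only two pairs of faces — a contradiction. Hence at most three pairs occur.

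Now suppose there are exactly three pairs, and list the six long faces in cyclic (counterclockwise) order as $F_1,F_2,F_3,-F_1,-F_2,-F_3$. The decisive step — and the one I expect to be hardest — is to show that consecutive long faces abut, i.e.\ that $B_X$ is the hexagon whose vertices $\pm v_1,\pm v_2,\pm v_3$ are the shared endpoints. A gap between two consecutive long faces would be an arc of $S_X$ lying strictly inside the corner cut out by the two face-lines; I would exclude it by a squeeze argument: the length-$\ge 1$ requirements on the two faces flanking the corner, evaluated against the supporting functional of the opposite (antipodal) face, pin the two near-endpoints together. Making this linkage rigorous — checking that the relevant chords really do point into the cone governed by a genuine opposite face rather than into an uncontrolled gap region — is the delicate part, and it is where I would spend most of the effort.

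Once $B_X$ is known to be the hexagon on $\pm v_1,\pm v_3,\pm v_2$ (counterclockwise, $v_3$ the middle vertex), I would finish by choosing coordinates with $v_1=(1,0)$, $v_2=(0,1)$ and writing $v_3=(\alpha,\beta)$, $\alpha,\beta>0$. The two faces meeting at $v_3$ are $[v_1,v_3]$ and $[v_3,v_2]$, with chords $v_3-v_1=(\alpha-1,\beta)$ and $v_2-v_3=(-\alpha,1-\beta)$, both pointing into the cone of the opposite face $[v_2,-v_1]$, on whose supporting line the norm is $(x,y)\mapsto y-x$. Thus these two lengths are $\beta-\alpha+1$ and $\alpha-\beta+1$, and requiring each to be $\ge 1$ squeezes $\alpha=\beta=:\lambda$, so $v_3=\lambda(v_1+v_2)$ while simultaneously both lengths equal $1$ — the asserted two pairs of length exactly $1$. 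Finally $\lambda>\tfrac12$ is nondegeneracy of the hexagon (at $\lambda=\tfrac12$ it degenerates to a parallelogram with only two pairs of faces), $\lambda\le 1$ is exactly what keeps the two chords inside the opposite cone (with $\lambda=1$ giving the affine-regular hexagon, all three pairs of length $1$), and the third length $\|v_1+v_2\|=1/\lambda\ge 1$ comes for free.
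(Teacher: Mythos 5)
First, a point of order: the paper does not prove this proposition at all --- it is quoted from \cite[Proposition 20]{MarSwanW} and used as a black box in the proof of Theorem \ref{theo-main2} --- so there is no in-paper argument to measure yours against; I can only assess the proposal on its own terms. Your first half, the bound of three pairs, is a complete and correct argument once ``segments'' are read (as the paper's application requires) as maximal segments grouped into antipodal pairs: eight pairwise distinct maximal faces of self-length at least $1$ meet only in endpoints, so the self-perimeter is at least $8$; Go\l{}\k{a}b's theorem caps it at $8$ with equality only for parallelograms, and a parallelogram has only two pairs of faces. That part stands.

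The second half, however, is a plan rather than a proof, and you say so yourself. The step that actually carries the proposition --- that three pairs of long maximal faces leave no arcs between consecutive faces, so that $B_X$ is exactly the hexagon on the shared endpoints --- is described only as ``a squeeze argument'' whose rigorous execution you explicitly defer. Nothing you wrote excludes, say, a centrally symmetric decagon with three long pairs and two short pairs of edges; the perimeter bound alone cannot, since six long faces account for at least $6$ of the at most $8$ available, leaving up to $2$ for gaps. This is the heart of the matter and it is missing. There is also a smaller but genuine hole in the endgame: you compute the lengths of $[v_1,v_3]$ and $[v_3,v_2]$ as $\beta-\alpha+1$ and $\alpha-\beta+1$ by asserting that both chords lie in the cone over the opposite face $[v_2,-v_1]$, i.e.\ in the closed second quadrant. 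That assertion is equivalent to $\alpha\le 1$ and $\beta\le 1$, which is part of the conclusion, not a given: the hexagon with $v_1=(1,0)$, $v_2=(0,1)$, $v_3=(2,2)$ is a perfectly good centrally symmetric convex unit ball, its chord $v_3-v_1=(1,2)$ points into the first quadrant, and the two faces meeting at $v_3$ both have self-length $3/2$, not $\beta-\alpha+1=1$. (This example does not contradict the proposition only because its third pair then has length $1/2$; to close your argument you must first derive $\alpha\le 1$ and $\beta\le 1$ from the hypothesis that \emph{all three} pairs are long, and only then is the quadrant computation legitimate.) So: the counting claim is proved; the hexagon classification --- both the ``no gaps'' step and the cone membership of the chords --- still needs to be supplied.
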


It is known  \cite{TanHuangLiu} that two-dimensional spaces $\R^2$ with the norm
$$
\|(a_1, a_2)\|_\infty = \max\{|a_1|, |a_2|\} \textrm{ or } \|(a_1, a_2)\|_1 = |a_1| + |a_2|
$$
(these space are called $\ell_\infty^2$ and $\ell_1^2$ respectively), and $\R^2$ equipped with the norm $\|(a_1, a_2)\| = \max\{|a_2|, |a_1| + \frac{1}{2}|a_2|\}$ (let us denote it $\tilde E$) are GL-spaces. The first two spaces are mutually isometric (their unit spheres are parallelograms), so as Banach spaces they are indistinguishable, and the unit sphere of $\tilde E$ is an equilateral in the metric of $\tilde E$ hexagon. Our next theorem says that there are no other (up to isometry)  two-dimensional  GL-spaces.
\begin{theo}\label{theo-main2}
Let $X$ be a real two-dimensional GL-space. Then $X$ is isomertic either to two-dimensional space $\R^2$ with such a norm that the unit ball is a parallelogram, or to the space $\tilde E$.
\end{theo}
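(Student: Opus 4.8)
The plan is to combine the polyhedrality result of Theorem \ref{theo-main1} with the planar geometric input of Proposition \ref{Uball}. By Theorem \ref{theo-main1} the unit ball $B_X$ is a centrally symmetric convex polygon, and by Corollary \ref{cor-plump-union} its boundary $S_X$ is covered by plump faces. In the plane a face is either a vertex or an edge. First I would observe that \emph{every} edge must be plump: a point in the relative interior of an edge $e$ admits a unique normalized supporting functional, so the only face containing it is $e$ itself; since that point lies in some plump face by Corollary \ref{cor-plump-union}, the face $e$ is plump.

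Next I would bound the edge lengths from below. Fix a plump edge $\Face(x^*) = [p,q]$. Then $\Face(x^*) - \Face(x^*)$ is the symmetric segment $\{t(q-p) : t \in [-1,1]\} \subset \ker x^*$ of length $2\|q-p\|$, while $B_X \cap \ker x^*$ is the symmetric segment $[-w,w]$ with $\|w\|=1$, of length $2$. Lemma \ref{lem-plump-face} gives the inclusion $\Face(x^*) - \Face(x^*) \supset B_X \cap \ker x^*$; as both lie on the same line through the origin, this forces $\|q-p\| \geq 1$, so every edge has length at least $1$. By central symmetry the edges come in parallel pairs, and Proposition \ref{Uball} asserts that there are at most three pairs of sphere-segments of length at least $1$. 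Hence $B_X$ has either four edges — a centrally symmetric quadrilateral, that is a parallelogram, which is the first alternative — or six edges.

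In the six-edge case Proposition \ref{Uball} pins the shape down to a hexagon with vertices $\pm x_1, \pm x_2, \pm\lambda(x_1+x_2)$ for some $\lambda \in (\frac12,1]$. Since the GL property and the conclusion are invariant under linear isomorphism, I would normalize $x_1 = (1,0)$, $x_2 = (0,1)$, so that $B_X$ has vertices $\pm(1,0),\pm(0,1),\pm\lambda(1,1)$ and, writing $\mu = \frac1\lambda - 1 \in [0,1)$, norm $\|(a_1,a_2)\| = \max\{|a_1+\mu a_2|,\ |\mu a_1 + a_2|,\ |a_1-a_2|\}$. To finish I would exploit plumpness quantitatively via Lemma \ref{lemma-plump-face-description}(4): the edge $e_1 = [(1,0),\lambda(1,1)]$ is plump, so $\dist(y,e_1) = 1 - x^*(y)$ for every $y \in S_X$, where $x^* = (1,\mu)$ is the supporting functional of $e_1$. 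Evaluating both sides at the vertex $y = (0,1)$, the right-hand side equals $1-\mu$, whereas a short direct computation of $\dist((0,1), e_1)$ in the above norm yields $1$. Therefore $\mu = 0$, i.e. $\lambda = 1$, and $B_X$ is an affine-regular hexagon; every such hexagon is linearly equivalent to the hexagonal unit sphere of $\tilde E$, which gives the desired isometry.

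The routine but essential computations are the length estimate of the second paragraph and the single distance evaluation $\dist((0,1),e_1)=1$ of the third. The genuine obstacle is this last step: one must first know the rigid hexagonal shape supplied by Proposition \ref{Uball}, and then upgrade the qualitative plumpness of a single edge into the sharp equality $\lambda = 1$ by testing the distance identity at a well-chosen vertex. Everything else is bookkeeping about the faces of a planar centrally symmetric polygon.
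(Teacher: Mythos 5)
Your proposal is correct and follows essentially the same route as the paper: polyhedrality with plump edges (Theorem \ref{theo-main1}), the lower bound $1$ on edge lengths via Lemma \ref{lem-plump-face}, Proposition \ref{Uball} to reduce to the parallelogram or the $\lambda$-hexagon, and finally the plumpness distance identity of Lemma \ref{lemma-plump-face-description}(4) tested at a vertex to force $\lambda=1$. The only differences are cosmetic: you test the mirror-image edge/vertex pair, and you re-derive the plumpness of every edge from Corollary \ref{cor-plump-union}, which Theorem \ref{theo-main1} already asserts.
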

\begin{proof}
 According to Theorem \ref{theo-main1}, the unit ball of $X$ is a polygon whose edges are plump. Lemma \ref{lem-plump-face} implies that the length of each edge is not smaller than 1.
Application of Proposition \ref{Uball} implies, that the unit ball of the space $X$ either is a parallelogram (and then the space is isometric to two-dimensional space $\ell_{\infty}^2$), or a hexagon with vertices $\pm x_1,\pm x_2,\pm \lambda(x_1+x_2)$
for some $\lambda \in (\frac{1}{2},1]$.  It remains to consider the case of hexagon.
Let us choose the basis vectors $e_1=x_1, e_2=x_2$. The coordinates of the vertices in this basis are: $(\pm1,0),(0,\pm1)$ and $(\pm\lambda,\pm\lambda)$. So $X$ may be considered as $\R^2$ equipped with the norm $\|(u, v)\| = \|u e_1 + v e_2\|$. Now we may explicitly write down an expression for the norm in this space. For every $x=(u,v)\in(\R^2,\|\cdot\|)$
$$
\|x\|= \max_{1\leq i\leq3}\{|f_i(x)|\},
$$
where $f_i(x), 1\leq i \leq 3$ are linear functionals of the form: $f_1(x)=u+\frac{1-\lambda}{\lambda}v$, $f_2(x)=\frac{1-\lambda}{\lambda}u+v$ and $f_3(x)=-u+v$.
Now, let us consider the edge $[\lambda(x_1+x_2), x_2] = \Face(f_2)$ and compute
$\dist(x_1,\Face(f_2))$.
\bea
\dist(x_1, \Face(f_2))&=&\inf_{\alpha\in[0,1]}\dist(x_1,\alpha\lambda x_1+\alpha\lambda x_2+(1-\alpha)x_2)\\
&=&\inf_{\alpha\in[0,1]}\|(1-\alpha\lambda)x_1+(\alpha-1-\alpha\lambda)x_2\|\\
&=&\inf_{\alpha\in[0,1]}\max_{1\leq i\leq3}\{|f_i(1-\alpha\lambda,\alpha-1-\alpha\lambda)|\}\\
&=&\inf_{\alpha\in[0,1]}f_3(1-\alpha\lambda,\alpha-1-\alpha\lambda)\\
&=&\inf_{\alpha\in[0,1]}(2-\alpha)=1.
\eea
On the other hand, since $\Face(f_2)$ is plump, $\dist(x_1,\Face(f_2)) = 1 - f_2(x_1) = 1 - \frac{1-\lambda}{\lambda}$, so $\lambda=1$. Summarizing, in the hexagonal case our space is isometric to $\R^2$ endowed with the norm
$$
\|x\|=\|(u,v)\|= \max\{|u|, |v|, |u - v|\}.
$$
This space is isometric to $\tilde E$.
\end{proof}

The following property of  $\tilde E$ will be helpful in the next section.

\begin{propos}\label{propA}
Let $\tilde E = \R^2$ equipped with the norm $\|(a_1, a_2)\| = \max\{|a_2|, |a_1| + \frac{1}{2}|a_2|\}$, $\tilde e_1 = (1, 0)$, $\tilde e_2 = (\frac{1}{2}, 1)$,  $\tilde e_3 = (- \frac{1}{2}, 1)$,  $\tilde e_2^* \in S_{\tilde E^*}$ be the second coordinate functional on $\R^2$:  $\tilde e_2^*(a, b) = b$. Then $\Face(\tilde e_2^*)$ is equal to the  edge of the hexagon $B_{\tilde E}$ that connects $\tilde e_2$ and $\tilde e_3$, and the vertex $\tilde e_1$ belongs to the kernel of $\tilde e_2^*$. Let $t \in [0, 1]$ and let $\tilde y = t \tilde e_2 + (1 - t)\tilde e_1$ be such an element of the edge  that connects $\tilde e_2$ and $\tilde e_1$ that  $\tilde e_2^*(\tilde y) = t$. Then if for a given $\alpha  > 0$ there is $\tilde x \in \Face(\tilde e_2^*)$ such that
\beq \label{eq-hexag-prA}
\|\tilde x  - \alpha\tilde y\| = 1 - \alpha t
\eeq
then $\alpha \le 1$.
\end{propos}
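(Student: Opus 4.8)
The plan is to reduce the statement to a short explicit computation in the plane using the formula for the norm of $\tilde E$. First I would coordinatize the data. Since $\tilde x\in\Face(\tilde e_2^*)$ is a point of the top edge, write $\tilde x=(s,1)$ with $|s|\le\frac12$, and compute $\tilde y=t\tilde e_2+(1-t)\tilde e_1=(1-\frac{t}{2},\,t)$, so that $\alpha\tilde y=(\alpha(1-\frac{t}{2}),\,\alpha t)$ and
$$
\tilde x-\alpha\tilde y=\bigl(s-\alpha(1-\tfrac{t}{2}),\ 1-\alpha t\bigr).
$$
The second coordinate is exactly $\tilde e_2^*(\tilde x-\alpha\tilde y)=1-\alpha t$. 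Since $\|\tilde e_2^*\|=1$, this already forces $1-\alpha t\ge 0$ (otherwise the left-hand side of \eqref{eq-hexag-prA}, being a norm, could not equal a negative number), whence $|1-\alpha t|=1-\alpha t$.

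Next I would feed this into the explicit norm $\|(a_1,a_2)\|=\max\{|a_2|,\ |a_1|+\frac12|a_2|\}$. Here the first entry of the maximum equals $1-\alpha t$, which is precisely the value prescribed on the right of \eqref{eq-hexag-prA}; hence \eqref{eq-hexag-prA} holds \ifff the second entry does not exceed it, that is,
$$
\bigl|s-\alpha(1-\tfrac{t}{2})\bigr|+\tfrac12(1-\alpha t)\le 1-\alpha t,
$$
which simplifies to $\bigl|s-\alpha(1-\tfrac{t}{2})\bigr|\le\tfrac12(1-\alpha t)$.

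The final step is to read off the bound on $\alpha$. The last inequality forces $s\ge\alpha(1-\tfrac{t}{2})-\tfrac12(1-\alpha t)=\alpha-\tfrac12$; combining this with the membership constraint $s\le\frac12$ coming from $\tilde x\in\Face(\tilde e_2^*)$ yields $\alpha-\tfrac12\le\tfrac12$, i.e. $\alpha\le 1$, as claimed.

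I do not expect a genuine obstacle: the argument is a short, explicit computation in the plane, and the only points requiring care are the bookkeeping of coordinates and the (harmless) degenerate case $1-\alpha t=0$, where the inequality pins $s=\alpha-\frac12$, and $s\le\frac12$ again gives $\alpha\le1$. Conceptually, the cleanliness of the reduction comes from the fact that $\tilde e_2^*(\tilde x-\alpha\tilde y)$ is pinned to $1-\alpha t$, so \eqref{eq-hexag-prA} is exactly the assertion that the trivial lower bound $\|z\|\ge\tilde e_2^*(z)$ is attained at $z=\tilde x-\alpha\tilde y$; this attainment controls the transversal coordinate of $z$ and produces the half-plane bound that caps $\alpha$ at $1$.
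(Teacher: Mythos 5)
Your proof is correct and follows essentially the same route as the paper's: both write $\tilde x=(b_1,1)$ with $|b_1|\le\frac12$, use the explicit lower bound $\|(z_1,z_2)\|\ge|z_1|+\frac12|z_2|$ on $\tilde x-\alpha\tilde y$, and extract the bound on $\alpha$ from it. The only difference is presentational — the paper argues by contradiction from $\alpha>1$, while you derive $s\ge\alpha-\tfrac12$ directly and conclude from $s\le\tfrac12$.
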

\begin{proof}
Indeed, assume that $\alpha >1$. Let us write $\alpha\tilde y$ in coordinate form $\alpha\tilde y = (a_1, a_2)$, $a_1, a_2 \ge 0$. Remark that  \eqref{eq-hexag-prA} implies that $a_2 = \tilde e_2^*(\alpha\tilde y) = \alpha t \le 1$, so $1 < \alpha = \|\alpha\tilde y\| = a_1 + \frac{1}{2}a_2 = a_1 + \frac{1}{2} \alpha t$, in particular
$$
a_1 > 1 - \frac{1}{2} \alpha t \ge  \frac{1}{2}.
$$
The vector $\tilde x$ is of the form $\tilde x = (b_1, 1)$ with $|b_1| \le \frac{1}{2}$. So $ \tilde x  - \alpha\tilde y = (b_1 - a_1, 1 - \alpha t)$, and $\|\tilde x  - \alpha\tilde y\| \ge |b_1 - a_1| + \frac{1}{2}(1 - \alpha t) = a_1 - b_1 + \frac{1}{2}(1 - \alpha t) \ge a_1 - \frac{1}{2} + \frac{1}{2}(1 - \alpha t) = a_1 - \frac{1}{2} \alpha t > 1 - \alpha t$. \end{proof}

\section{Direct sums of GL-spaces} \label{sec-sums}

Let  $E=(\R^n,\|\cdot\|_E)$ be a normed space, and denote $e_k$, $k=1, \ldots, n$, the elements of the canonical basis: $e_1 = (1, 0, \ldots, 0)$, $e_2 = (0, 1, 0, \ldots , 0)$, etc. The  norm $\|\cdot\|_E$ is called \emph{absolute} if it satisfies the following conditions:
\begin{enumerate}
 \item[(i)] $\|e_k\|_E = 1$,  $k=1, \ldots, n$;
 \item[(ii)] for every $a = (a_1, ... , a_n)$ the vector $|a| := (|a_1|, ... , |a_n|)$ has the same norm as $a$:
 $$
 \|(a_1, ... , a_n)\|_E = \|(|a_1|, ... , |a_n|)\|_E.
 $$
\end{enumerate}
The above properties imply that the \emph{norm is monotone} in the following sense: if $a = (a_1, ... , a_n)$ and $b = (b_1, ... , b_n)$ satisfy $0 \le a_k \le b_k$, $k=1, \ldots, n$, then  $\|a\|_E \le  \|b\|_E$.

The dual $E^*$ to $E$ we identify in the standard way with $(\R^n,\|\cdot\|_{E^*})$, where a functional $b = (b_1, ... , b_n) \in E^*$  acts on $a = (a_1, ... , a_n)  \in E$ by the formula $b(a) = b_1 a_1 + \ldots + b_n a_n$. Remark that the norm $\|\cdot\|_{E^*}$ is also absolute.

Let $X_1, \ldots, X_n$ be normed spaces, and $E=(\R^n,\|\cdot\|_E)$ be a space with absolute norm. The $E$-sum of the spaces $X_k$ (we will denote it $E(X_k)_{1}^n$) is the vector space of all $n$-tuples $x = (x_1, ... , x_n)$, $x_k \in X_k$, $k=1, \ldots, n$, equipped with the norm
$$
\|x\| = (\|x_1\|, \ldots, \|x_n\|)_E.
$$
In order to shorten the notation, for $x = (x_1, ... , x_n) \in E(X_k)_{1}^n$ we denote $N(x) = (\|x_1\|, \ldots, \|x_n\|)$. In this notation, $\|x\| = (N(x))_E$.
The dual space to $E(X_k)_{1}^n$ is $E^*(X_k^*)_{1}^n$, where $f = (f_1, ... , f_n) \in E^*(X_k^*)_{1}^n$  acts on $x = (x_1, ... , x_n)  \in E(X_k)_{1}^n$ by the rule $f(x) = f_1(x_1) + \ldots + f_n(x_n)$.

\begin{definition}\label{def-GL-respecting}
\emph{ A space $E=(\R^n,\|\cdot\|_E)$ with absolute norm is said to be} GL-respecting \emph{(GLR-space for short) if for every collection $X_1, \ldots, X_n$ of  GL-spaces  the corresponding  $E$-sum  $E(X_k)_{1}^n$ is generalized lush}.
\end{definition}

It is known \cite[Theorem 2.11]{TanHuangLiu} that $\ell_1^n$ and $\ell_\infty^n$ are GL-respecting (formally speaking, that theorem deals with infinite sums, but it remains valid for finite sums as well). Our goal in this section is to find out, if there are other examples of GLR-spaces. We start with an evident remark.

\begin{propos}\label{prop-easy}
Every GLR-space is a GL-space.
\end{propos}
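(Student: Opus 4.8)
The plan is to specialize the defining property of a GLR-space to the simplest possible collection of GL-spaces, namely the one-dimensional scalar lines. First I would record that the line $\R$ is itself a GL-space: its unit sphere is $\{-1, 1\}$, and for $x = 1$ (the case $x = -1$ being symmetric) and any $\eps > 0$ the functional $x^* = 1 \in S_{\R^*}$ produces the slice $\Slice(x^*, \eps) = [1 - \eps, 1]$, which contains $x$. A direct check of the two points $y = \pm 1$ of $S_\R$ then gives
$$
\dist(y, \Slice(x^*,\eps)) + \dist(-y, \Slice(x^*,\eps)) = 2 - \eps < 2 + \eps,
$$
so the defining inequality of a GL-space holds.

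Next I would take $X_1 = \cdots = X_n = \R$ and identify the $E$-sum $E(\R)_1^n$ with $E$ itself. Indeed, for $x = (x_1, \ldots, x_n)$ with scalar coordinates one has $N(x) = (|x_1|, \ldots, |x_n|)$, so by condition (ii) of absoluteness
$$
\|x\| = (N(x))_E = \|(|x_1|, \ldots, |x_n|)\|_E = \|x\|_E.
$$
Thus $E(\R)_1^n$ and $E$ are literally the same normed space.

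Finally, since $E$ is GL-respecting and each $X_k = \R$ is a GL-space, Definition \ref{def-GL-respecting} yields that $E(\R)_1^n$ is generalized lush; combined with the identification above, $E$ is a GL-space. There is essentially no obstacle here: the only points requiring a moment's care are the elementary verification that $\R$ is a GL-space and the harmless bookkeeping that the scalar $E$-sum reproduces $E$ on the nose.
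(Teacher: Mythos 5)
Your proposal is correct and follows exactly the paper's argument, which simply says to substitute $X_k = \R$ into the definition of a GLR-space; you have merely filled in the (routine) verifications that $\R$ is a GL-space and that the scalar $E$-sum coincides with $E$ via absoluteness of the norm.
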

\begin{proof}
Just substitute into the definition of GLR-space $X_k = \R$, $k=1, \ldots, n$.
\end{proof}

To tell the truth, when we started to think about a possible description of GLR-spaces, we expected that the converse to Proposition \ref{prop-easy} should be true. Surprisingly, this is not the case. Before we formulate another necessary condition for being a GLR-space, let us make some preliminary observations. From Proposition \ref{prop-easy} and Theorem \ref{theo-main1}  it follows that the unit ball of every GLR-space $E$ is a polyhedron. Then the unit ball of $E^*$  is a polyhedron as well. Denote $\ex(B_{E^*})$ the set of extreme points of $B_{E^*}$. The set $\ex(B_{E^*})$ is finite, the number of elements in $\ex(B_{E^*})$ is the same as the number of faces of  $B_{E}$ and each $d \in \ex(B_{E^*})$ corresponds to a face $\Face(d)$ of  $B_{E}$. Since the norm $\|\cdot\|_{E^*}$ is absolute, $\ex(B_{E^*})$ is mirror-symmetric with respect to each coordinate hyperplane, that is $d \in \ex(B_{E^*})$ \ifff $|d| \in \ex(B_{E^*})$.

\begin{definition}\label{def-GL-monotone}
\emph{Let $E=(\R^n,\|\cdot\|_E)$ be a space  with absolute norm. A face $\Face(d^*) \subset S_E$,   $d^* = (d_1, ... , d_n)  \in \ex(B_{E^*})$ is said to be} monotone plump \emph{if, denoting $D=\{k: d_k \neq 0\}$, for every $a  = (a_1, ... , a_n)  \in S_{E}$  with $a_k \ge 0$ and every $z = (z_1, ... , z_n) \in B_E$ with
\beq \label{eq_z_k>0}
0 \le  z_k \le a_k \textrm{ for } k \in D
\eeq
there is such a $b = (b_1, ... , b_n)  \in S_{E}$ that $d^*(b) = 1$, $\|b - z\| = 1 - d^*(z)$ and $b_k \ge a_k$ for $k \in D$}.  \emph{The space $E$ is said to be} GL-monotone \emph{(GLM-space for short) if for every $d^* \in \ex(B_{E^*})$ the corresponding face $\Face(d^*) \subset S_E$ is monotone plump}.
\end{definition}
\begin{propos}\label{prop-GL-monotone-conseq}
In the above notation let  $d^* = (d_1, ... , d_n)  \in \ex(B_{E^*})$ generate a monotone plump face. Then
\begin{enumerate}
 \item all the coordinates $d_k$ belong to $\{0, 1, -1\}$;
 \item the property formulated in Definition \ref{def-GL-monotone} remains valid for every $z = (z_1, ... , z_n) \in B_E$ that satisfies the following weaker version of \eqref{eq_z_k>0}: $|z_k| \le a_k$  for $k \in D$.
\end{enumerate}
\end{propos}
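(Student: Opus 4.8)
The plan is to prove the two claims of Proposition~\ref{prop-GL-monotone-conseq} separately, both relying on cleverly chosen test data feeding into the monotone plumpness hypothesis.

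\textbf{Part (1).}
First I would show that each nonzero coordinate $d_k$ of $d^*$ must be $\pm 1$. Since $d^* \in \ex(B_{E^*})$ and the norm is absolute, we know $\|d^*\|_{E^*} = 1$, so certainly $|d_k| \le 1$ for all $k$ (because $e_k \in S_E$ and $d^*(e_k) = d_k$). It remains to exclude $0 < |d_k| < 1$ for $k \in D$. The idea is to pick a witness $a \in S_E$ and exploit the defining equality $\|b - z\| = 1 - d^*(z)$, which forces a tight alignment between $b$ and the functional $d^*$. The natural choice is to take $a = e_j$ for some $j \in D$ and $z = 0$ (which trivially satisfies \eqref{eq_z_k>0}); then monotone plumpness produces $b \in S_E$ with $d^*(b) = 1$ and $b_k \ge a_k$ for $k \in D$, in particular $b_j \ge 1$. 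Since $b \in B_E$ and the norm is monotone and absolute, $b_j \ge 1$ together with $\|b\|_E = 1$ forces $b = e_j$ (all other coordinates must vanish). But then $1 = d^*(b) = d_j$, so $d_j = 1$. Running the same argument with $a = e_j$ for each $j \in D$ gives $d_j = 1$ whenever $d_j > 0$; applying it after a sign flip in coordinate $j$ (using property (ii) of the absolute norm, under which $\ex(B_{E^*})$ is mirror-symmetric) handles the case $d_j < 0$, yielding $d_j = -1$. Hence every $d_k \in \{0, 1, -1\}$.

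\textbf{Part (2).}
For the second claim I want to upgrade the hypothesis \eqref{eq_z_k>0} (namely $0 \le z_k \le a_k$) to the weaker $|z_k| \le a_k$ for $k \in D$. The key observation is that by Part (1) we may assume $d_k = 1$ for $k \in D$ after flipping signs of coordinates, because the whole setup is invariant under the isometries of $E$ that change the signs of individual coordinates (these preserve the absolute norm, permute $\ex(B_{E^*})$ via $d^* \mapsto |d^*|$-type reflections, and send faces to faces). Concretely, given $z$ with $|z_k| \le a_k$, I would apply a coordinate sign-reversal $\sigma$ (reflecting exactly those coordinates $k$ where $z_k < 0$) to obtain $\tilde z = \sigma z$ with $0 \le \tilde z_k \le a_k$ for $k \in D$; since $\sigma$ is a linear isometry fixing $a$ (as $a_k \ge 0$) and mapping the face $\Face(d^*)$ to a face of the same type, the conclusion for $\tilde z$ transports back to $z$ by applying $\sigma^{-1} = \sigma$ to the resulting $b$.

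\textbf{Main obstacle.}
I expect the delicate point to be the bookkeeping in Part (2): verifying that the sign-reversal isometry $\sigma$ genuinely preserves all three conclusions simultaneously --- the membership $b \in S_E$, the equality $\|b - z\| = 1 - d^*(z)$, and the inequalities $b_k \ge a_k$ for $k \in D$ --- after transporting through $\sigma$. The equality $\|b - z\| = 1 - d^*(z)$ is the sensitive one, since it couples $b$, $z$ and $d^*$, and one must check that $d^*(\sigma z) = (\sigma^* d^*)(z)$ behaves correctly when $\sigma$ only flips a subset of coordinates; here Part~(1) is essential because $d_k \in \{0, \pm 1\}$ makes $\sigma$ fix $\Face(d^*)$ up to sign. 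The inequality $b_k \ge a_k$ survives because $a_k \ge 0$ is unaffected by $\sigma$ and the reflected $b$ keeps large coordinates in the $D$-positions. Once the invariance of the configuration under coordinate sign-flips is set up cleanly, both parts follow with essentially no computation beyond what the monotonicity and absoluteness of the norm already provide.
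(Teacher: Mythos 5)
Both parts of your plan contain steps that fail, not merely delicate points.

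\textbf{Part (1).} The assertion that ``$b_j \ge 1$ together with $\|b\|_E = 1$ forces $b = e_j$ (all other coordinates must vanish)'' is false for a general absolute norm: in $\ell_\infty^n$ the vector $b = (1,1,\dots,1)$ has norm $1$ and $j$-th coordinate $1$ but is far from $e_j$. Monotonicity only gives $|b_j| \le \|b\|_E$, so you learn $b_j = 1$, nothing more. From $b_j = 1$ and $d^*(b) = 1$ alone one cannot conclude $d_j = 1$ (think of $d^* = (\tfrac12,\tfrac12)$ and $b = (1,1)$ in $\ell_\infty^2$; this $d^*$ is of course not extreme, but your argument never invokes extremality or the equality $\|b - z\| = 1 - d^*(z)$ to rule such a configuration out). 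Indeed, with your choice $z = 0$ that equality degenerates to $\|b\|_E = 1$ and carries no information. The missing idea is the right choice of $z$: the paper picks $a \in \Face(d^*)$ with all coordinates nonzero (possible because the face is $(n-1)$-dimensional) and sets $z = a - a_j e_j$; then $\|b - z\| = 1 - d^*(z) = d_j a_j$, while the $j$-th coordinate of $b - z$ equals $a_j > 0$, so $d_j a_j \ge a_j$ and $d_j = 1$.

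\textbf{Part (2).} The reflection $\sigma$ you propose flips exactly those coordinates $k$ with $z_k < 0$; on any such coordinate in $D$ one has $a_k \ge |z_k| > 0$, so $\sigma$ does \emph{not} fix $a$, it sends $\Face(d^*)$ to $\Face(\sigma d^*)$ (a genuinely different face unless $d_k = 0$ on every flipped coordinate), and the transported $b = \sigma\tilde b$ satisfies $b_k = -\tilde b_k \le -a_k < a_k$, destroying the required inequality $b_k \ge a_k$. Monotone plumpness is deliberately orthant-specific, and the only legitimate sign-flip reduction is the global one normalizing $d^*$ itself to be non-negative; partial flips dictated by $z$ do not preserve the configuration. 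The paper instead argues by the triangle inequality: it sets $w_k = |z_k|$ for $k \in D$ and $w_k = z_k$ otherwise, obtains $b$ for $w$ from the definition, and checks that the same $b$ works for $z$ via $\|b - z\| \le \|b - w\| + \|w - z\| = 1 - d^*(w) + d^*(w-z) = 1 - d^*(z) = d^*(b-z) \le \|b-z\|$, using the identity $d^*(u) = \|u\|_E$ for non-negative $u$ supported on $D$ --- an identity that itself relies on Part (1).
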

\begin{proof}
By symmetry, it is sufficient to consider $d^* = (d_1, ... , d_n)  \in \ex(B_{E^*})$ with $d_k \ge 0$. Since $\Face(d^*)$  is an $(n-1)$-dimensional affine subset of the  sphere, it cannot be contained in a kernel of a coordinate functional. So the set
$$
\{x  = (x_1, ... , x_n)  \in \Face(d^*) \colon \exists_{ k \in \{1, \ldots , n\}} \, x_k = 0  \}
$$
is nowhere dense in $\Face(d^*)$. Take an $x  = (x_1, ... , x_n)  \in \Face(d^*)$ with all non-zero coordinates and consider $a  = (a_1, ... , a_n)  \in S_{E}$  with $a_k  = |x_k| > 0$, $k=1, \ldots, n$. Then
$$
1 = d^*(x) \le d^*(a) \le \|a\| =  \|x\| = 1,
$$
which means that $a \in \Face(d^*)$. Fix a $j \in D$. Our goal is to demonstrate that $d_j = 1$. Apply Definition \ref{def-GL-monotone} to such a $z = (z_1, ... , z_n) \in B_E$ that $z_j = 0$, and $z_k = a_k$ for $k \neq j$. We get a $b = (b_1, ... , b_n)  \in S_{E}$ with $d^*(b) = 1$, $\|b - z\| = 1 - d^*(z)$ and $b_k \ge a_k$ for $k \in D$. Remark that
$$
1 = d^*(a) = \sum_{k \in D} d_k a_k \le \sum_{k \in D} d_k b_k = d^*(b) = 1,
$$
which implies that $b_k = a_k$ for $k \in D$, and in particular the j-th coordinate of $b-z$ is equal to $a_j$. Finally,
$$
a_j \ge d_j a_j = \sum_{k \in D} d_k(a_k - z_k) = \sum_{k \in D} d_k(b_k - z_k) = 1 - d^*(z) = \|b - z\| \ge a_j.
$$
Consequently, $d_j = 1$. This completes the proof of the statement (1).

For the statement (2), remark first that for every $u = (u_1, ... , u_n) $ such that $u_k \ge 0$ for  $k \in D$ and $u_k = 0$ for  $k \not\in D$ the following equality holds:
$$
d^*(u) = \|u\|.
$$
Indeed, thanks to (1) we have
$$
\|u\| \ge d^*(u) = \sum_{k \in D} u_k = \sum_{k \in D} u_k \|e_k\| \ge \|u\|.
$$
Now consider $a  = (a_1, ... , a_n)  \in S_{E}$  with $a_k \ge 0$, a $z = (z_1, ... , z_n) \in B_E$ with $|z_k| \le a_k$, $k \in D$. Define $w  = (w_1, ... , w_n)  \in S_{E}$ with $w_k = |z_k|$  for $k \in D$ and $w_k = z_k$  for remaining values of $k$. Find a $b = (b_1, ... , b_n)  \in S_{E}$ that serves in  Definition \ref{def-GL-monotone} for $w$, that is such that $d^*(b) = 1$, $\|b - w\| = 1 - d^*(w)$ and $b_k \ge a_k$ for $k \in D$.  Let us demonstrate that the same $b$ serves for $z$:
$\|b - z\| \le  \|b - w\| + \|w - z\| = 1 - d^*(w) + d^*(w - z)  =  1 - d^*(z) = d^*(b - z) \le \|b - z\|$.
\end{proof}

\begin{propos}\label{prop-zero-one}
Let $E=(\R^n,\|\cdot\|_E)$ be GL-respecting. Then $E$ is GL-monotone.
\end{propos}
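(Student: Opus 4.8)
The plan is to exploit the defining property of a GL-respecting space for one carefully chosen collection of factors, reading off monotone plumpness from the geometry of the hexagonal plane $\tilde E$ via Proposition \ref{propA}. By the mirror symmetry of the absolute norm we may assume throughout that $d^* = (d_1, \ldots, d_n)$ has all $d_k \geq 0$, and we write $D = \{k : d_k > 0\}$. Since $E$ is a GL-space (Proposition \ref{prop-easy}), Theorem \ref{theo-main1} makes $B_E$ a polytope all of whose facets, in particular $\Face(d^*)$, are plump; by Lemma \ref{lemma-plump-face-description} this already yields $\dist_E(z, \Face(d^*)) = 1 - d^*(z)$ for every $z \in B_E$, so the whole difficulty is to produce a metric projection $b$ of $z$ onto $\Face(d^*)$ that in addition satisfies $b_k \geq a_k$ for $k \in D$.

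First I would set up the test space. For $k \in D$ put $X_k = \tilde E$, and for $k \notin D$ put $X_k = \R$; all factors are GL-spaces, so $Z := E(X_k)_1^n$ is a GL-space because $E$ is GL-respecting, and it is finite-dimensional. Consider the functional $\Phi = (\phi_k) \in Z^*$ with $\phi_k = d_k \tilde e_2^*$ for $k \in D$ and $\phi_k = 0$ otherwise. Since $(\|\phi_k\|)_k = d^* \in \ex(B_{E^*})$ and each $\tilde e_2^*$ is an extreme point of $B_{\tilde E^*}$ (a vertex of the dual hexagon), $\Phi$ is an extreme point of $B_{Z^*}$ by the standard description of extreme functionals on an absolute sum. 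Hence $\Face(\Phi)$ is a facet of the polytope $B_Z$, and Theorem \ref{theo-main1} guarantees that it is plump. This is the step that converts the GL-respecting hypothesis into usable geometry.

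Next I would encode the data $(a,z)$ into a point $w = (w_k) \in B_Z$. For $k \in D$ I take $w_k = a_k \tilde y_k \in \tilde E$, where $\tilde y_k = t_k \tilde e_2 + (1 - t_k)\tilde e_1$ and $t_k = z_k/a_k \in [0,1]$ (so $\|w_k\| = a_k$ and $\tilde e_2^*(w_k) = z_k$); for $k \notin D$ I take $w_k = z_k \in \R$. Then $\Phi(w) = \sum_{k \in D} d_k z_k = d^*(z)$, so plumpness of $\Face(\Phi)$ (Lemma \ref{lemma-plump-face-description}(3)) provides $b^Z \in \Face(\Phi)$ with $\|b^Z - w\|_Z = 1 - d^*(z)$. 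Setting $b_k = \|b^Z_k\|$ for $k \in D$ and $b_k = b^Z_k$ for $k \notin D$, the equalities forced by $\Phi(b^Z) = 1 = \|b^Z\|_Z$ and by $\|b^Z - w\|_Z = \Phi(b^Z - w)$ give $\tilde e_2^*(b^Z_k) = \|b^Z_k\| = b_k$ and $\|b^Z_k - w_k\|_{\tilde E} = b_k - z_k$ for each $k \in D$; in particular $b \in \Face(d^*)$. Dividing the last identity by $b_k$ exhibits $b^Z_k/b_k \in \Face(\tilde e_2^*)$ and $\|b^Z_k/b_k - (a_k/b_k)\tilde y_k\| = 1 - (a_k/b_k)t_k$, so Proposition \ref{propA} applies with $\alpha = a_k/b_k$ and yields $\alpha \leq 1$, i.e. $b_k \geq a_k$. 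A coordinatewise comparison $|b_k - z_k| = \|b^Z_k - w_k\|_{X_k}$ (on $D$ because $b_k \geq a_k \geq z_k$, on $D^c$ by the choice $w_k = z_k$, after flipping signs of the unconstrained coordinates of $b^Z$ to match those of $z$) then gives $\|b - z\|_E = \|b^Z - w\|_Z = 1 - d^*(z)$, completing the verification.

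The step I expect to be the main obstacle is exactly the handling of the coordinates $k \notin D$, where $z_k$ is unconstrained: the profile $N(w) = (a_k)_{k \in D} \oplus (|z_k|)_{k \notin D}$ need not lie in $B_E$ (for instance in $\tilde E$ itself, with $d^* = \tilde e_2^*$, $a = (\tfrac12,1)$ and $z = (1,0)$ one gets $\|N(w)\|_{\tilde E} = \tfrac32$), so Lemma \ref{lemma-plump-face-description}(3) cannot be invoked for this $w$. I would first prove the statement under the extra assumption $|z_k| \leq a_k$ for all $k$, where $N(w) \leq a$ and the argument above goes through verbatim, and then remove it: since on $D^c$ the face $\Face(d^*)$ is insensitive to $d^*$, the projection is forced to clip $z$ on $D^c$ to the room left above $(a|_D,0)$ inside $B_E$, and replacing $w_k$ ($k \notin D$) by that clipped value keeps $w \in B_Z$, keeps $\Phi(w) = d^*(z)$, and should still deliver a genuine projection of the original $z$. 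Making this clipping-plus-distance estimate precise, rather than the extraction of $b_k \geq a_k$ (which is the clean part, powered by Proposition \ref{propA}), is where the real work lies.
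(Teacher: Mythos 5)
Your core mechanism is exactly the paper's: plug hexagonal factors into the GLR hypothesis, pass to the face of the sum generated by the functional with components $d_k\tilde e_2^*$, extract a nearest point via plumpness, and use Proposition \ref{propA} to force $b_k\ge a_k$ on $D$. The one structural deviation is that you put $X_k=\R$ in the coordinates $k\notin D$, and this is precisely what creates the obstacle you flag: in $\R$ the norm of $w_k$ and its ``$z$-value'' cannot be prescribed independently, so $N(w)$ may leave $B_E$. The paper takes $X_k=\tilde E$ for \emph{every} $k$ and encodes the data as $y=(a_1y_1,\ldots,a_ny_n)$ with $y_k=t_k\tilde e_2+(1-t_k)\tilde e_1\in S_{\tilde E}$ and $t_k=z_k/a_k$; then $N(y)=a\in S_E$ automatically, the two-dimensionality of the hexagon decoupling $\|y_k\|$ from $\tilde e_2^*(y_k)$, so the clipping problem never arises. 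With that single modification your argument closes for every $z$ satisfying $0\le z_k\le a_k$ in \emph{all} coordinates --- which, it is worth saying, is also the only case the paper's own proof covers, since it too needs $t_k\in[0,1]$ for every $k$ and not just for $k\in D$. So the residual difficulty you isolate (a coordinate $k\notin D$ where \eqref{eq_z_k>0} imposes nothing on $z_k$) is real but is not resolved in the paper either; your $\tilde E$-example with $a=(\tfrac12,1)$, $z=(1,0)$ is a fair warning about your own encoding, but the ``clipping'' step you defer is not something the published argument supplies.

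Two smaller remarks. First, your verification that $b_k\ge a_k$ via $\alpha=a_k/b_k$ in Proposition \ref{propA} silently assumes $b_k>0$; the degenerate case $b_k=0$ forces $a_k=z_k=0$ and is harmless, but should be said. Second, you invoke ``the standard description of extreme functionals on an absolute sum'' to conclude that $\Phi\in\ex(B_{Z^*})$ and hence, via Theorem \ref{theo-main1}, that $\Face(\Phi)$ is plump; the paper does exactly the same for $x^*=(d_1x_1^*,\ldots,d_nx_n^*)$ without proof, so the level of detail is acceptable, but note that Theorem \ref{theo-main1} only yields plumpness of \emph{facets}, so the extremality of $\Phi$ (equivalently, that $\Face(\Phi)$ is a facet of $B_Z$) is a point that genuinely needs the extremality of $d^*$ in $B_{E^*}$ together with that of $\tilde e_2^*$ in $B_{\tilde E^*}$.
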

\begin{proof}
It is sufficient to consider $d^* = (d_1, ... , d_n)  \in \ex(B_{E^*})$ with $d_k \ge 0$ and demonstrate that  $\Face(d^*) \subset S_E$ is monotone plump. As before, we denote $D=\{k: d_k \neq 0\}$. Substitute into the definition of GLR-space $n$ isometric copies of the hexagonal space $\tilde E$. Denote these copies $X_k$, $k=1, \ldots, n$. Fix $t_k \in [0, 1]$ such that $t_k a_k = z_k$ and for each $k=1, \ldots, n$ take a functional $x_k^* \in  \ex(B_{X_k^*})$ being a copy of the second coordinate functional on  $\tilde E$,   and take an element $y_k \in S_{X_k}$ belonging to the copy of the edge connecting $\tilde e_2$ and $\tilde e_1$ in $\tilde E$  such that $x_k^*(y_k) = t_k$ (like $\tilde y$ from Proposition \ref{propA}). The following \textbf{Property A} that will be used below is a direct consequence of Proposition \ref{propA}: if for a given $\alpha  > 0$ there is $x_k \in \Face(x_k^*)$ such that $\|x_k - \alpha y_k\| = 1 - \alpha t_k$, then $\alpha \le 1$.

By definition of GLR-space, the space $X = E(X_k)_{1}^n$ is generalized lush. Consider $x^*= (d_1 x_1^*, ... , d_nx_n^*) \in \ex(B_{X^*})$. Since $X$ is finite-dimensional,  the corresponding face $\Face(x^*)$ of  $B_{X}$ must be plump. Applying the reformulation of plumpness given in (4) of Lemma \ref{lemma-plump-face-description} to $y = (a_1 y_1, ... , a_n y_n) \in S_X$ with arbitrary $a  = (a_1, ... , a_n)  \in S_{E}$, $a_k \ge 0$, we find an $x = (b_1 x_1, ... , b_n x_n) \in \Face(x^*)$ with $x_k \in  S_{X_k}$, $b_k \ge 0$,  $b = (b_1, ... , b_n)  \in S_{E}$ such that $\|x - y\| = 1 - x^*(y)$. The condition $x \in \Face(x^*)$ implies that
\bea
1 & \ge \sum_{k=1}^n d_k b_k \ge  \sum_{k=1}^n d_k b_k x_k^*(x_k) = x^*(x) = 1.
\eea
This means that $ \sum_{k=1}^n d_k b_k = 1$ and for every $k \in D$ we have $b_k x_k^*(x_k) = b_k$. Using the properties listed above we obtain the following chain of inequalities
\bea
1 &-& x^*(y) = \sum_{k=1}^n d_k x_k^*( b_k x_k -  a_k y_k) \le \sum_{k=1}^n d_k \|b_k x_k -  a_k y_k\| \\
&\le& \left\| (\|b_1 x_1 -  a_1 y_1\|, \ldots, \|b_n x_n -  a_n y_n\| )\right\|_E = \|x - y\| = 1 - x^*(y).
\eea
From this we deduce that for every  $k \in D$ we have $\|b_k x_k -  a_k y_k\| = x_k^*( b_k x_k -  a_k y_k)$. The Property A of hexagonal norm gives us $a_k \le b_k$  for every  $k \in D$. On the other hand,
\bea
1 &-& x^*(y) = \sum_{k=1}^n d_k (x_k^*( b_k x_k) -  x_k^*(a_k y_k))  \\
&\le& \left\| (|x_1^*( b_1 x_1) -  x_1^*(a_1 y_1)|, \ldots, |x_n^*( b_n x_n) -  x_n^*(a_n y_n)| )\right\|_E  \\
&\le& \left\| (\|b_1 x_1 -  a_1 y_1\|, \ldots, \|b_n x_n -  a_n y_n\| )\right\|_E = \|x - y\| = 1 - x^*(y),
\eea
So, $\sum_{k=1}^n d_k (x_k^*( b_k x_k) -  x_k^*(a_k y_k))$ is equal to
$$
\left\| (|x_1^*( b_1 x_1) -  x_1^*(a_1 y_1)|, \ldots, |x_n^*( b_n x_n) -  x_n^*(a_n y_n)| )\right\|_E.
$$
Substituting $x_k^*(x_k) = 1$, $x_k^*(y_k) = t_k$ we obtain
$$
\sum_{k=1}^n d_k (b_k -  t_k a_k) = \left\| (| b_1 -  t_k a_1|, \ldots, |b_n -  t_k a_n| )\right\|_E,
$$
that is $d^*(b - z) = \|b - z\|$.
\end{proof}

\begin{corollary}
 The only two-dimensional GLR-spaces are $\ell_1^2$ and $\ell_\infty^2$.
 \end{corollary}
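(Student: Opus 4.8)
The plan is to combine the necessary condition ``GLR $\Rightarrow$ GLM'' from Proposition \ref{prop-zero-one} with the restriction on the coordinates of dual extreme points obtained in Proposition \ref{prop-GL-monotone-conseq}, and then carry out an elementary classification of the admissible two-dimensional dual unit balls.

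First I would fix a GLR-space $E = (\R^2, \|\cdot\|_E)$. By Proposition \ref{prop-zero-one} it is GL-monotone, so by Definition \ref{def-GL-monotone} every face $\Face(d^*)$ with $d^* \in \ex(B_{E^*})$ is monotone plump. Proposition \ref{prop-GL-monotone-conseq}(1) then says that every $d^* \in \ex(B_{E^*})$ has all its coordinates in $\{0, 1, -1\}$. Discarding the origin, which is never an extreme point of a unit ball, this confines $\ex(B_{E^*})$ to the eight-point set $\{(\pm 1, 0),\, (0, \pm 1),\, (\pm 1, \pm 1)\}$.

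Second, I would classify the centrally symmetric convex polygons whose vertex set is a subset of these eight points and which, by the absoluteness of $\|\cdot\|_{E^*}$, is invariant under the two coordinate-axis reflections. The key observation is that the four axis points and the four corner points cannot simultaneously be extreme: whenever a corner such as $(1,1)$ lies in $B_{E^*}$, then by symmetry all four corners do, their convex hull is the square $[-1,1]^2$, and the axis points $(\pm 1, 0),(0, \pm 1)$ fall in the relative interiors of its edges, hence fail to be extreme. Therefore either $B_{E^*}$ contains no corner, in which case (being a genuine two-dimensional body) its extreme points are exactly $(\pm 1, 0),(0, \pm 1)$ and $B_{E^*} = \conv\{(\pm 1, 0),(0,\pm 1)\}$ is the diamond, or $B_{E^*}$ is the square $[-1,1]^2$. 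In the first case $E^*$ is $\ell_1^2$ and so $E = \ell_\infty^2$; in the second $E^* = \ell_\infty^2$ and $E = \ell_1^2$.

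Finally, the converse is already available: by \cite[Theorem 2.11]{TanHuangLiu} both $\ell_1^2$ and $\ell_\infty^2$ are GL-respecting, so these two spaces are precisely the two-dimensional GLR-spaces. An alternative route to the same end would start from Proposition \ref{prop-easy} and Theorem \ref{theo-main2}, which already force $E$ to be either a parallelogram space (and absoluteness then pins it down to $\ell_1^2$ or $\ell_\infty^2$) or the hexagonal space $\tilde E$; one would exclude $\tilde E$ by observing that its dual ball has the extreme point $(1, \tfrac12)$, whose coordinate $\tfrac12 \notin \{0, 1, -1\}$ contradicts Proposition \ref{prop-GL-monotone-conseq}(1). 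I expect the main obstacle to be the polygon-classification step: one must argue carefully, invoking both central symmetry and the axis symmetry coming from absoluteness, that no mixed configuration of axis and corner points can survive and that the remaining degenerate (lower-dimensional) vertex sets are ruled out because $B_{E^*}$ must be a full two-dimensional body.
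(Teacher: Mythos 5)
Your proposal is correct and follows essentially the same route as the paper: both combine Proposition \ref{prop-zero-one} with Proposition \ref{prop-GL-monotone-conseq}(1) to confine $\ex(B_{E^*})$ to $\{-1,0,1\}^2$ and then split into the two cases according to whether a corner point such as $(1,1)$ is present, yielding $B_{E^*}=[-1,1]^2$ (so $E=\ell_1^2$) or the diamond (so $E=\ell_\infty^2$). Your version is slightly more self-contained in that it spells out why corner and axis points cannot both be extreme and explicitly records the converse via \cite[Theorem 2.11]{TanHuangLiu}, whereas the paper leans on Theorem \ref{theo-main2} for an initial reduction and leaves those details implicit.
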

 \begin{proof}
 Proposition \ref{prop-easy} and Theorem \ref{theo-main2} imply that in dimension 2 there are no other candidates for being GLR-space except for those spaces whose unit ball is either parallelogram or hexagon. Let $X=(\R^2,\|\cdot\|)$ be a GLR-space. Propositions \ref{prop-GL-monotone-conseq} and \ref{prop-zero-one} imply that for every $(x,y)\in \ex(B_{X^*})$   $x,y \in \{-1,0, 1\}$. Let us consider two cases.

\textbf{Case 1.}  $(1,1) \in \ex(B_{X^*})$. Then by symmetry all four points $(\pm 1, \pm1) \in \ex(B_{X^*})$, i.e.  $X^*=\ell_\infty^2$ and consequently $X=\ell_1^2$.

\textbf{Case 2.}  $(1,1) \notin \ex(B_{X^*})$. Then $(1,0), (-1,0), (0,1), (0,-1) \in \ex(B_{X^*})$, i.e.  $X^*=\ell_1^2$ and consequently $X=\ell_\infty^2$.
 \end{proof}

Now we are starting preparations for the inverse to Proposition \ref{prop-zero-one} statement.

\begin{propos}\label{prop-difficult}
Let $E=(\R^n,\|\cdot\|_E)$ be a GL-monotone space. Then for every collection $X_1, \ldots, X_n$ such that each $X_k$ is ultra-GL with respect to a subspace $W_k \subset X_k^*$ the corresponding  $E$-sum $X = E(X_k)_{1}^n$ is ultra-GL with respect to the subspace $W = E^*(W_k)_{1}^n \subset X^*$. In particular, if $X_1, \ldots, X_n$ are ultra-GL then $X$ is ultra-GL as well.
\end{propos}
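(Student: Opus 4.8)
The plan is to verify the definition of ultra-GL($W$) directly: for an arbitrary $x = (x_1, \ldots, x_n) \in S_X$ I must produce a functional $x^* \in S_W$ with $x \in \Face(x^*)$ whose face is plump. The whole argument rests on first understanding the shape of a face $\Face(x^*)$ in an $E$-sum and then feeding the monotone plumpness of the base space $E$ together with the plumpness of the component faces into the characterization of plump faces from Lemma \ref{lemma-plump-face-description}.

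\emph{Construction of $x^*$.} Set $a = N(x) = (\|x_1\|, \ldots, \|x_n\|) \in S_E$; since $a_k \ge 0$ and the norm of $E^*$ is absolute, I can pick $d^* = (d_1, \ldots, d_n) \in \ex(B_{E^*})$ with $d^*(a) = 1$ and all $d_k \ge 0$, and by Proposition \ref{prop-GL-monotone-conseq}(1) then $d_k \in \{0, 1\}$; write $D = \{k \colon d_k = 1\}$. For each $k \in D$ the hypothesis that $X_k$ is ultra-GL($W_k$) supplies an $f_k \in S_{W_k}$ such that $\Face(f_k)$ is plump and $f_k(x_k) = \|x_k\|$ (apply the definition to $x_k/\|x_k\|$ when $x_k \ne 0$, and to an arbitrary unit vector otherwise). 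Put $x^* = (d_1 f_1, \ldots, d_n f_n)$, filling in $f_k = 0$ for $k \notin D$. A direct computation gives $\|x^*\|_W = \|d^*\|_{E^*} = 1$ and $x^*(x) = \sum_{k \in D} f_k(x_k) = \sum_{k \in D} a_k = d^*(a) = 1$, so $x^* \in S_W$ and $x \in \Face(x^*)$. The key structural remark, proved by chasing the equality case in $x^*(p) = \sum_{k \in D} f_k(p_k) \le \sum_{k\in D}\|p_k\| = d^*(N(p)) \le \|N(p)\|_E = \|p\|$, is that $p \in \Face(x^*)$ if and only if $N(p) \in \Face(d^*)$ inside $E$ and $f_k(p_k) = \|p_k\|$ for every $k \in D$.

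\emph{Plumpness of $\Face(x^*)$.} By the equivalence $(4)\Leftrightarrow(1)$ in Lemma \ref{lemma-plump-face-description} it suffices to produce, for each $y = (y_1, \ldots, y_n) \in S_X$, a point $p \in \Face(x^*)$ with $\|p - y\| = 1 - x^*(y)$. Monotonicity gives $\|y_k\| \le 1$, so each $y_k \in B_{X_k}$; set $c = N(y) \in S_E$. I would invoke the monotone plumpness of $\Face(d^*)$, in the signed form of Proposition \ref{prop-GL-monotone-conseq}(2), with data $a := c$ and $z$ defined by $z_k = f_k(y_k)$ for $k \in D$ and $z_k = \|y_k\|$ for $k \notin D$; here $|z_k| \le \|y_k\| = c_k$ for $k \in D$ and $z \in B_E$, so the hypotheses hold. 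This yields $b = (b_1, \ldots, b_n) \in \Face(d^*)$ with $b_k \ge c_k = \|y_k\|$ for $k \in D$ and $\|b - z\|_E = 1 - d^*(z)$. Because $b_k \ge \|y_k\|$, the point $y_k/b_k$ lies in $B_{X_k}$, so Lemma \ref{lemma-plump-face-description}(3) applied in $X_k$ and rescaled by $b_k$ furnishes $p_k \in b_k \Face(f_k)$ with $\|p_k - y_k\| = b_k - f_k(y_k) = b_k - z_k \ge 0$; for $k \notin D$ I set $p_k = b_k \hat y_k$ (any unit direction if $y_k = 0$), giving $\|p_k - y_k\| = |b_k - c_k| = |b_k - z_k|$. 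Then $N(p - y) = |b - z|$ coordinatewise, whence by absoluteness $\|p - y\| = \|b - z\|_E = 1 - d^*(z) = 1 - x^*(y)$, while $N(p) = b \in \Face(d^*)$ and $f_k(p_k) = \|p_k\| = b_k$ for $k \in D$ place $p$ in $\Face(x^*)$ by the structural remark.

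\emph{Main obstacle.} The crux is the precise bookkeeping that makes the three layers fit together: one must choose $z$ so that simultaneously $d^*(z) = x^*(y)$, the admissibility constraint $|z_k| \le a_k$ of monotone plumpness holds, and the conclusion $b_k \ge \|y_k\|$ is exactly the inequality needed for the rescaled component face $b_k\Face(f_k)$ to realize the clean distance $b_k - f_k(y_k)$. Getting the component distances to assemble into the coordinatewise identity $N(p-y) = |b - z|$, so that the absolute norm of $E$ can be applied verbatim, is what forces these choices, and the signed version Proposition \ref{prop-GL-monotone-conseq}(2) is essential because $f_k(y_k)$ may be negative. Everything else is the routine verification that the constructed $x^*$ and $p$ have unit norm and lie in the required faces.
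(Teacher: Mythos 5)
Your proof is correct and follows essentially the same route as the paper's: the same functional $x^* = (d_1 f_1, \ldots, d_n f_n)$ built from an extreme point $d^*$ norming $N(x)$ and component functionals norming the $x_k$, the same appeal to the signed form of monotone plumpness (Proposition \ref{prop-GL-monotone-conseq}(2)) to produce $b$ with $b_k \ge \|y_k\|$ on $D$, and the same rescaling of the plump component faces via Lemma \ref{lemma-plump-face-description}(3) to assemble a point of $\Face(x^*)$ realizing the distance $1 - x^*(y)$. The only nitpick is the degenerate case $b_k = 0$ for $k \in D$, where $y_k/b_k$ is undefined; the paper handles it explicitly (then $\|y_k\| = 0$, so any $p_k \in b_k\Face(f_k) = \{0\}$ works), and your argument goes through verbatim once this is noted.
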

\begin{proof}
According to Definition \ref{def-U-GL}, for a given $x = (x_1, ... , x_n) \in S_X$ we must find an $x^* = (x_1^*, ... , x_n^*) \in S_{W}$ such that $x \in \Face(x^*)$ and $\Face(x^*)$ is plump. Let us do it. For each $k=1, \ldots, n$, using that $X_k$ is ultra-GL with respect to $W_k$, let us find a $w_k^* \in S_{W_k}$ such that $\Face(w_k^*) \subset S_{X_k}$ is plump in $X_k$ and $x_k \in \|x_k\|\Face(w_k^*)$, that is
\begin{equation} \label{prop-difficult-eq0}
w_k^*(x_k) = \|x_k\|.
\end{equation}
Also, monotone GL-ness of $E$ applied to $N(x) \in S_E$ gives us a $d^* =  (d_1, ... , d_n) \in \ex(B_{E^*})$, $d_k \ge 0$, such that $N(x) \in \Face(d^*) \subset S_E$,
\begin{equation} \label{prop-difficult-eq1}
\sum_{k=1}^n d_k \|x_k\| = 1
\end{equation}
and $\Face(d^*)$ is monotone plump in $E$. Let us demonstrate that $x_k^* := d_k w_k^*$, $k=1, \ldots, n$,  generate the functional $x^* = (x_1^*, ... , x_n^*) \in S_{W}$ we need. Indeed, the conditions \eqref{prop-difficult-eq0} and \eqref{prop-difficult-eq1} imply that $x \in \Face(x^*)$, so it remains to show that $\Face(x^*)$ is plump. As above, denote  $D = \{k: d_k \neq 0\}$. Consider an arbitrary $y = (a_1y_1, ... , a_n y_n) \in S_X$, $y_k \in S_{X_k}$, $a_k \ge 0$. For $z =  (z_1, ... , z_n) =  (a_1w_1^*(y_1), ... , a_nw_n^*(y_n)) \in B_{E}$ there is an element $b = (b_1, ... , b_n) \in \Face(d^*)$ such that $\|b - z\|_E = 1 - d^*(z)$ and $b_k \ge a_k$ for all $k \in D$.

According to (3) of Lemma \ref{lemma-plump-face-description}, for every $k \in D$ such that $b_k \neq 0$ there is a $w_k \in \Face(w_k^*)$ such that $\bigl\| w_k - \frac{a_k y_k}{b_k}\bigr\| =| 1 - a_kw_k^*\left( \frac{y_k}{b_k}\right)|$.  If $k \in D$ and $b_k = a_k = 0$ take  $w_k \in \Face(w_k^*)$ arbitrarily. With such an election, for every  $k \in D$ we have
\begin{equation} \label{equat-kak-nado}
\bigl\| b_k w_k - a_k y_k \bigr\| = |b_k - a_k w_k^*(y_k)|.
\end{equation}
For $k \not\in D$ the election of $w_k^*$ does not affect the value of $x_k^* = d_k w_k^* = 0$. So we can take for $k \not\in D$ as $w_k^*$ a supporting functional at $y_k$ and take $w_k = y_k$. Then
$\bigl\| b_k w_k - a_k y_k \bigr\| = |b_k - a_k| = |b_k - a_k w_k^*(y_k)|$, so \eqref{equat-kak-nado} remains valid for all $k$. Put $\tilde x = (b_1w_1, ... , b_n w_n) $. Then $\tilde x \in \Face(x^*)$ and
\begin{align*} \label{equat-kak-nado2}
1 - x^*(y) &= \sum_{k=1}^n d_k(b_k - a_k w_k^*(y_k)) = 1 - d^*(z) = \|b - z\|_E \\
&= \left\| (|b_1 - a_1 w_1^*(y_1)|, \ldots, |b_n - a_n w_n^*(y_n)|)\right\|_E \\
&=  \left\| (\|b_1w_1 - a_1 y_1\|, \ldots, \|b_1w_1 - a_1 y_1\|) \right\|_E = \|\tilde x - y\|.
\end{align*}
\end{proof}

In order to complete our paper it remains to apply the standard ultraproduct technique. We refer for instance to \cite[Ch. 16]{Kad} for properties of filters and ultrafilters and to classical paper \cite{Heinrich} for  introduction to ultrapowers of Banach spaces. Let us recall the basic definitions.

\begin{definition}
\emph{A family $\mathfrak F$ of subsets of the set $\N$ is called a} filter on $\N$ \emph{if it satisfies the
following axioms:}
\begin{enumerate}
  \item $\mathfrak F$ \emph{is not empty};
  \item $\emptyset \notin \mathfrak F$;
  \item \emph{if} $A, B \in \mathfrak F$, \emph{then} $A \cap B \in \mathfrak F$;
  \item \emph{if} $A \in \mathfrak F$ \emph{and} $A \subset B \subset \N$, \emph{then} $B \in \mathfrak F$.
\end{enumerate}
\end{definition}
\begin{definition}
\emph{Let} $Y$ \emph{be a topological space, and} $\mathfrak F$ \emph{be a filter on} $\N$. \emph{The point} $y$ \emph{in}
$Y$ \emph{is called the} limit of the sequence of $y_n \in Y$, $n = 1, 2, \ldots$ with respect to the filter $\mathfrak F$ \emph{(denoted}
$y = \lim\limits_{\mathfrak F} y_n$\emph{), if for any neighborhood} $U$  \emph{of} $y$  \emph{there exists an element} $A \in \mathfrak F$ \emph{such that} $y_n \in U$ for all $n \in A$.
\end{definition}
\begin{definition}
\emph{An} ultrafilter on $\N$ \emph{is a filter on $\N$ that is maximal with respect to inclusion. An ultrafilter is nontrivial, if all its elements are infinite.}
\end{definition}
Observe, that the the existence of nontrivial ultrafilters is guaranteed by Zorn's lemma and the limit with respect to the ultrafilter exists for any bounded sequence of reals.

Let $\mathfrak U$ be a nontrivial ultrafilter on $\N$, $X$ be a Banach space. The ultrapower  $X^{\mathfrak U}$ is the quotient space of $\ell_\infty(X)$ over the subspace of those $x = (x_n) \in \ell_\infty(X)$ for which $\lim\limits_{\mathfrak{U}} \|x_n\| = 0$.  The ultrapower  $X^{\mathfrak U}$ we will view as the space of bounded sequences $x = (x_n)$ with $x_n \in X$, equipped with the norm $\|x\|= \lim\limits_{\mathfrak{U}} \|x_n\|$ under the convention that $x = (x_n)$ and $y = (y_n)$ are considered to be the same element of the ultrapower if $\lim\limits_{\mathfrak{U}} \|x_n - y_n\| = 0$. The ultrapower $(X^*)^{\mathfrak U}$ can be identified with  a subspace of  $(X^{\mathfrak U})^*$ as follows:  every $x^* = (x_n^*)$, $x_n^* \in X^*$, $\sup_n \|x_n^*\| <\infty$ is a  linear functional on $X^{\mathfrak U}$  that acts on every $x = (x_n)$ by the rule $x^*(x)= \lim_{\mathfrak U}x_n^*(x_n)$. For a functional $x^* = (x_n^*)$ its norm is the same as its norm as an element of the ultrapower $(X^*)^{\mathfrak U}$:  $\|x^*\|= \lim\limits_{\mathfrak{U}} \|x_n^*\|$.

The theorem below is modeled after a similar result about narrow operators \cite[Lemma 2.6]{BKSW2005}. An analogous result for lush spaces was demonstrated in \cite[Corollary 4.4]{BKMM2009}. The implication $\mathbf{(1)\Rightarrow(2)}$ is almost contained in demonstration of \cite[Proposition 2.2]{Hardtke}.

\begin{theo}\label{theo-main3}
Let $X$ be a Banach space, and $\mathfrak U$ be a nontrivial ultrafilter on $\N$. Then the following assertions are equivalent:
\begin{enumerate}
\item  $X$ is a GL-space,
\item    $X^{\mathfrak U}$ is ultra-GL with respect to the subspace $W = (X^*)^{\mathfrak U}$.
\end{enumerate}
\end{theo}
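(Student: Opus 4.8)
The plan is to prove the two implications separately. In both directions the workhorse is the elementary fact that, for bounded scalar sequences, the ultrafilter limit $\lim_{\mathfrak U}$ is linear, monotone, and (since $\mathfrak U$ is nontrivial) agrees with the ordinary limit on convergent sequences; in particular $\lim_{\mathfrak U}(1/n) = 0$.

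For $\mathbf{(1)\Rightarrow(2)}$ I would start from an arbitrary $\xi = (x_n) \in S_{X^{\mathfrak U}}$, discard a set outside $\mathfrak U$ so that $x_n \neq 0$, and set $\hat x_n = x_n/\|x_n\|$. Applying the GL property to $\hat x_n$ with $\eps = 1/n$ produces $x_n^* \in S_{X^*}$ with $x_n^*(\hat x_n) \ge 1 - 1/n$ together with the slice inequality for $\Slice(x_n^*, 1/n)$. I then set $\xi^* = (x_n^*) \in W = (X^*)^{\mathfrak U}$; since $\|x_n^*\| = 1$ we get $\xi^* \in S_W$, and $\xi^*(\xi) = \lim_{\mathfrak U} x_n^*(x_n) = \lim_{\mathfrak U}\|x_n\|\,x_n^*(\hat x_n) = 1$, so $\xi \in \Face(\xi^*)$. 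To see that $\Face(\xi^*)$ is plump, I take any $\eta = (y_n) \in S_{X^{\mathfrak U}}$ (normalizing the coordinates, which does not change $\eta$ in the ultrapower), feed $\hat y_n$ into the $n$-th slice inequality, and pick near-optimal $u_n, v_n \in \Slice(x_n^*, 1/n)$ with $\|\hat y_n - u_n\| + \|\hat y_n + v_n\| < 2 + 3/n$. The sequences $u = (u_n)$ and $v = (v_n)$ land in $\Face(\xi^*)$ because $x_n^*(u_n), x_n^*(v_n) \in [1 - 1/n, 1]$, and passing to $\lim_{\mathfrak U}$ gives $\|\eta - u\| + \|\eta + v\| \le 2$. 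This is essentially the computation underlying the cited \cite[Proposition 2.2]{Hardtke}.

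For $\mathbf{(2)\Rightarrow(1)}$ I would fix $x \in S_X$ and $\eps > 0$, embed $x$ as the constant sequence $\xi = (x, x, \ldots) \in S_{X^{\mathfrak U}}$, and use ultra-GL-ness to obtain $\xi^* = (x_n^*) \in S_W$, normalized so that $\|x_n^*\| = 1$, with $\xi \in \Face(\xi^*)$ and $\Face(\xi^*)$ plump. From $\xi^*(\xi) = \lim_{\mathfrak U} x_n^*(x) = 1$ I get $B := \{n : x_n^*(x) > 1 - \eps\} \in \mathfrak U$. The goal is to locate a single index $n \in B$ for which the slice $\Slice(x_n^*, \eps) \subset B_X$ satisfies $\dist(y, \Slice(x_n^*, \eps)) + \dist(-y, \Slice(x_n^*, \eps)) < 2 + \eps$ for \emph{all} $y \in S_X$; that $x_n^*$ then witnesses the GL property at $(x, \eps)$.

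The step I expect to be the main obstacle is exactly this last one: the GL definition demands the slice inequality uniformly over all $y \in S_X$ with one fixed slice, whereas the hypothesis only controls one coordinate of the ultrapower at a time, and a direct ``for each $y$ the good indices form a $\mathfrak U$-set'' argument fails because intersecting over the (possibly uncountably many) $y$ need not stay in $\mathfrak U$. I plan to circumvent this by an overspill argument by contradiction. Suppose the set of indices $n$ for which $\Slice(x_n^*, \eps)$ works uniformly is not in $\mathfrak U$; then on a set $C \in \mathfrak U$ one can choose bad witnesses $y_n \in S_X$ with $\dist(y_n, \Slice(x_n^*, \eps)) + \dist(-y_n, \Slice(x_n^*, \eps)) \ge 2 + \eps$. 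Assembling $\eta = (y_n) \in S_{X^{\mathfrak U}}$ and invoking plumpness of $\Face(\xi^*)$ through item (2) of Lemma \ref{lemma-plump-face-description}, I obtain $u = (u_n), v = (v_n) \in \Face(\xi^*)$ with $\|\eta - u\| + \|\eta + v\| \le 2$. Because $\lim_{\mathfrak U} x_n^*(u_n) = \lim_{\mathfrak U} x_n^*(v_n) = 1$ and $\|u_n\|,\|v_n\| \le 1$, for $\mathfrak U$-most $n$ the coordinates $u_n, v_n$ lie in $\Slice(x_n^*, \eps)$, so that $\dist(y_n, \Slice(x_n^*, \eps)) + \dist(-y_n, \Slice(x_n^*, \eps)) \le \|y_n - u_n\| + \|y_n + v_n\| < 2 + \eps$ on a set in $\mathfrak U$. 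Intersecting with $C$ produces an index that is simultaneously good and bad, a contradiction. Hence the set of uniformly good indices meets $B$ in a member of $\mathfrak U$, and any such index yields the required functional $x^* = x_n^*$.
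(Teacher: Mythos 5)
Your argument is correct and follows essentially the same route as the paper: the forward direction assembles the functionals produced by the GL property with $\eps = 1/n$ and passes to the ultrafilter limit, and the reverse direction packages bad witnesses $y_n$ into an element of the ultrapower and plays the plumpness of $\Face(\xi^*)$ against them via ultrafilter limits. The only cosmetic difference is where the contradiction is placed in $(2)\Rightarrow(1)$ (you negate ``the set of uniformly good indices lies in $\mathfrak U$'', while the paper negates the GL property at $x$ and shows no face through the constant sequence can be plump), but the underlying computation is identical.
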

\begin{proof}
$\mathbf{(1)\Rightarrow(2)}$. Our goal is to demonstrate that for every  $x  = (x_n)\in S_{X^{\mathfrak U}}$, $x_n \in X$,  there exists an $x^* = (x_n^*) \in S_{W}$ such that $x \in \Face(x^*)$ and $\Face(x^*)$ is plump. Let us do this. Remark, that $\|x\|= \lim\limits_{\mathfrak{U}} \|x_n\| = 1$, so $\lim\limits_{\mathfrak{U}} \left\| \frac{x_n}{\|x_n\|} - x_n \right\| = 0$, so $(x_n) = \left( \frac{x_n}{\|x_n\|} \right)$ as elements of the ultrapower. Substituting if necessary $x_n$ by $ \frac{x_n}{\|x_n\|}$ we may assume that $x_n \in S_X$, $n = 1, 2, \ldots$ Since by (1) $X$ is a GL-space, for each $n \in \N$ there is $x_n^* \in S_{X^*}$ such that $x_n \in \Slice(x_n^*,\frac{1}{n}) = S_n$ and \eqref{eq*} holds true for every $y\in S_X$. Denote  $x^* = (x_n^*)$. By our construction,
$$
\|x^*\|= \lim\limits_{\mathfrak{U}} \|x_n^*\| = 1 = \lim\limits_{\mathfrak{U}} x_n^*(x_n) = x^*(x).
$$
This means that  $x^* \in S_{W}$ and $x \in \Face(x^*)$.
It remains to show that the corresponding face $\Face(x^*)$ is plump.
 Let  $y  = (y_n)\in S_{X^{\mathfrak U}}$, $y_n \in S_X$. For every $n \in \N$, using \eqref{eq*}, pick $u_n^1, u_n^2 \in S_n$ in such a way that
$$
\|y_n - u_n^1\| + \|y_n + u_n^2\| \leq 2 + \frac{1}{n}.
$$
Denote $u_i  = (u_n^i) \in S_{X^{\mathfrak U}}$, $i = 1,2$. We have $u_i \in \Face(x^*)$ and $ \|y - u_1\| + \|y + u_2\| \leq 2$ which by definition \ref{defin-plump} means that indeed  $\Face(x^*)$ is plump.

$\mathbf{(2)\Rightarrow(1)}$. This time let $x \in S_X$ be an arbitrary element. Assume to the contrary that there is an $\eps > 0$ such that for every $x^* \in S_{X^*}$ with $x^*(x) > 1 - \eps$ there is a $y \in S_X$ with
$$
\dist(y, \Slice(x^*, \eps)) + \dist(- y, \Slice(x^*, \eps)) \ge 2 + \eps.
$$
Take $\tilde x  = (x, x, x, \ldots) \in S_{X^{\mathfrak U}}$ and let us show that for every $x^* = (x_n^*) \in S_{W}$ such that $\tilde x \in \Face(x^*)$ the $\Face(x^*)$ is not plump. Indeed, denote $A = \{n \in \N \colon x_n^*(x) > 1 - \eps\}$. The condition $1 = x^*(\tilde x) = \lim\limits_{\mathfrak{U}} x_n^*(x)$ imply that $A \in \mathfrak U$. Using our assumption, for every $n \in A$ take  a $y_n \in S_X$ with
\begin{equation} \label{equa-ultr-contrad}
\dist(y_n, \Slice(x_n^*, \eps)) + \dist(- y_n, \Slice(x_n^*, \eps)) \ge 2 + \eps.
\end{equation}
Denote $\tilde y = (y_n) \in S_{X^{\mathfrak U}}$. If the face $\Face(x^*)$ were plump,  there would be $u_i  = (u_n^i) \in \Face(x^*)$, $i = 1,2$, such that $ \|\tilde y - u_1\| + \|\tilde y + u_2\| \leq 2$ . This contradicts \eqref{equa-ultr-contrad}.
\end{proof}

Now, finally, the main result of the section.

\begin{theo}\label{theo-main4}
A space $E=(\R^n,\|\cdot\|_E)$ with absolute norm is GL-respecting if and only if it is GL-monotone.
\end{theo}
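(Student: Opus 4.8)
The plan is to prove both implications. One direction is already in hand: Proposition \ref{prop-zero-one} states precisely that every GL-respecting space is GL-monotone, so it remains to establish the converse, that GL-monotonicity implies GL-respecting. The strategy for this harder direction is to combine the ``deterministic'' gluing result of Proposition \ref{prop-difficult} with the ultrapower characterization of GL-spaces from Theorem \ref{theo-main3}, exploiting the fact that ultrapowers commute with finite absolute sums. The reason an ultrapower detour is needed at all is that Proposition \ref{prop-difficult} requires each summand to be ultra-GL (a condition about plump \emph{faces}), whereas a general GL-space only supplies plump \emph{slices}; Theorem \ref{theo-main3} is exactly the bridge that upgrades the slice condition to a face condition after passing to an ultrapower.

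So, fix a GL-monotone $E$ and an arbitrary collection $X_1, \ldots, X_n$ of GL-spaces, and set $X = E(X_k)_{1}^n$. Fix a nontrivial ultrafilter $\mathfrak U$ on $\N$. By the implication $\mathbf{(1)\Rightarrow(2)}$ of Theorem \ref{theo-main3}, each $X_k^{\mathfrak U}$ is ultra-GL with respect to the subspace $W_k = (X_k^*)^{\mathfrak U}$. The first key step is the isometric identification
$$
X^{\mathfrak U} = \left(E(X_k)_{1}^n\right)^{\mathfrak U} = E\left((X_k)^{\mathfrak U}\right)_{1}^n,
$$
which holds because $E$ is finite-dimensional: its absolute norm is continuous, so the ultrafilter limit of $\|(x_1^{(m)}, \ldots, x_n^{(m)})\| = (\|x_1^{(m)}\|, \ldots, \|x_n^{(m)}\|)_E$ equals $(\lim_{\mathfrak U}\|x_1^{(m)}\|, \ldots, \lim_{\mathfrak U}\|x_n^{(m)}\|)_E$, and each coordinate is by definition the norm in $X_k^{\mathfrak U}$. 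The same argument applied to $X^* = E^*(X_k^*)_{1}^n$ yields $(X^*)^{\mathfrak U} = E^*\left((X_k^*)^{\mathfrak U}\right)_{1}^n$, and one checks that the canonical embeddings $(X_k^*)^{\mathfrak U} \hookrightarrow (X_k^{\mathfrak U})^*$ assemble coordinatewise into the canonical embedding $(X^*)^{\mathfrak U} \hookrightarrow (X^{\mathfrak U})^*$. Consequently $(X^*)^{\mathfrak U}$ is precisely the subspace $E^*(W_k)_{1}^n$ of $(X^{\mathfrak U})^*$.

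With these identifications in place the conclusion is immediate. Apply Proposition \ref{prop-difficult} to the spaces $X_k^{\mathfrak U}$, each ultra-GL with respect to $W_k = (X_k^*)^{\mathfrak U}$: since $E$ is GL-monotone, the $E$-sum $E\left((X_k)^{\mathfrak U}\right)_{1}^n$ is ultra-GL with respect to $W = E^*(W_k)_{1}^n$. Translating through the identifications of the previous paragraph, this says exactly that $X^{\mathfrak U}$ is ultra-GL with respect to $(X^*)^{\mathfrak U}$. Finally, the implication $\mathbf{(2)\Rightarrow(1)}$ of Theorem \ref{theo-main3} gives that $X = E(X_k)_{1}^n$ is a GL-space. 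Since $X_1, \ldots, X_n$ were arbitrary GL-spaces, $E$ is GL-respecting.

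The main obstacle is the delicate bookkeeping of the second paragraph: verifying that the ultrapower of a finite absolute sum is isometrically the absolute sum of the ultrapowers, both for $X$ and for its dual, and — most importantly — that under these identifications the subspace $E^*\left((X_k^*)^{\mathfrak U}\right)_{1}^n$ supplied by Proposition \ref{prop-difficult} coincides with the subspace $(X^*)^{\mathfrak U}$ demanded by Theorem \ref{theo-main3}. All of this rests on the finite-dimensionality of $E$ together with the continuity of the absolute norm, which is what allows the single ultrafilter limit to be distributed across the finitely many coordinates; once this compatibility is confirmed, no further estimates are required and the three cited results chain together directly.
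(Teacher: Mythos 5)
Your proposal is correct and follows essentially the same route as the paper's own proof: both directions are handled by citing Proposition \ref{prop-zero-one} for necessity, and by chaining Theorem \ref{theo-main3}, the identification of $\left(E(X_k)_{1}^n\right)^{\mathfrak U}$ with $E\left(X_k^{\mathfrak U}\right)_{1}^n$ (and the corresponding dual identification), and Proposition \ref{prop-difficult} for sufficiency. Your additional remarks justifying the ultrapower--sum identification via finite-dimensionality and continuity of the absolute norm only make explicit what the paper leaves as ``the natural isometry.''
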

\begin{proof}
The ``only if'' part is demonstrated in Proposition \ref{prop-zero-one}, so it remains to demonstrate the ``if'' part. Let $E$ be GL-monotone, and let $X_1, \ldots, X_n$ be a collection of  GL-spaces. Due to the previous theorem, for a fixed nontrivial ultrafilter $\mathfrak U$ on $\N$ all $X_k^{\mathfrak U}$, $k = 1, \ldots, n$, are ultra-GL with respect to the corresponding subspaces $(X_k^*)^{\mathfrak U}$.  By Proposition \ref{prop-difficult} the  $E$-sum $E(X_k^{\mathfrak U})_{1}^n$ is ultra-GL with respect to the subspace $E^*\left((X_k^*)^{\mathfrak U}\right)_{1}^n$. Using the natural isometry between $E(X_k^{\mathfrak U})_{1}^n$ and $\left(E(X_k)_{1}^n\right)^{\mathfrak U}$ we deduce that  $\left(E(X_k)_{1}^n\right)^{\mathfrak U}$  is ultra-GL with respect to the subspace $\left(E^*(X_k^*)_{1}^n\right)^{\mathfrak U}$. Another application of the previous theorem gives us the desired  generalized lushness of  $E(X_k)_{1}^n$.
\end{proof}

\bibliographystyle{amsplain}

\end{document}